\documentclass[sn-mathphys-num]{sn-jnl}
\usepackage{graphicx}%
\usepackage{subfigure}
\usepackage{multirow}%
\usepackage{amsmath,amssymb,amsfonts,}%
\usepackage{amsthm}%
\usepackage{mathrsfs}%
\usepackage[title]{appendix}%
\usepackage{xcolor}%
\usepackage{textcomp}%
\usepackage{manyfoot}%
\usepackage{booktabs}%
\usepackage{algorithm}%
\usepackage{algorithmicx}%
\usepackage{algpseudocode}%
\usepackage{listings}%
\theoremstyle{thmstyleone}%
\newtheorem{theorem}{Theorem}
\newtheorem{proposition}[theorem]{Proposition}%
\newtheorem{lemma}[theorem]{Lemma}
\newtheorem{corollary}[theorem]{Corollary}
\theoremstyle{thmstyletwo}%
\newtheorem{remark}{Remark}%

\theoremstyle{thmstylethree}%
\newtheorem{definition}{Definition}%

\newcommand{\R}{\mathbb{R}}

\renewcommand{\phi}{\varphi}

\newcommand{\Ric}{\operatorname{Ric}}
\newcommand{\tr}{\operatorname{tr}}

\renewcommand{\d}{\partial}

\raggedbottom
\begin{document}

\title[]{Almost coK\"{a}hler manifolds in the context of mixed Killing vector field}
\author{\fnm{Paritosh} \sur{Ghosh}} \email{paritoshg.math.rs@jadavpuruniversity.in}
\affil{\orgdiv{Department of Mathematics}, \orgname{Jadavpur University}, \orgaddress{\street{Jadavpur}, \city{Kolkata}, \postcode{700032}, \state{West Bengal}, \country{India}}}

\abstract{A vector field \unboldmath{$V$} on any (semi-)Riemannian manifold is said to be {\it mixed Killing} if for some nonzero smooth function $f$, it satisfies $L_VL_Vg=fL_Vg$, where $L_V$ is the Lie derivative along $V$. This class of vector fields, as a generalization of Killing vector fields, not only identify the isometries of the manifolds, but broadly also contain the class of homothety transformations. We prove an essential curvature identity along those fields on any (semi-)Riemannian manifold and thus generalize the Bochner's theorem for Killing vector fields in this setting. Later we study it in the framework of almost coK\"{a}hler structure and we prove that the Reeb vector field \unboldmath{$\xi$} on an almost coK\"{a}hler manifold is mixed Killing if and only if the operator $h=0$. Moving further, we completely classify almost coK\"{a}hler manifolds with $\xi$ mixed Killing vector field in dimension 3. In particular, if $\xi$ on an $\eta$-Einstein almost coK\"{a}hler manifold is mixed Killing, then the manifold is of constant scalar curvature with $h=0$. Also we show that on any $(\kappa,\mu)$-almost coK\"{a}hler manifold, $\xi$ is mixed Killing if and only if the manifold is coK\"{a}hler. In the end we present few model examples in this context.}

\pacs[MSC Classification 2020]{53C25, 53D15}
\keywords{Mixed Killing vector field, almost coK\"{a}hler manifold, coK\"{a}hler manifold, \unboldmath{$(\kappa,\mu)$}-almost coK\"{a}hler manifold}

\maketitle
\section{Introduction}
 Metric tensor preserving vector fields on a (semi-)Riemannian manifold $(M,g)$ were introduced as Killing vector fields. Equivalently, a vector field $V$ on $(M,g)$ is Killing if $L_Vg=0$, where $L_V$ denotes the Lie derivative along $V$. Killing vector fields predict the symmetries of the manifolds and they are naturally Jacobi vector fields along every geodesic $\gamma(t), ~~t\in\R$ (satisfies $\nabla_{\gamma'}\nabla_{\gamma'}V+R(V,\gamma')\gamma'=0$, $R$ is Riemann curvature tensor) \cite{BN}. A celebrated theorem of Bochner \cite{Bo} regarding Killing vector field says that 
\vspace{0.2cm}
\begin{theorem}
    Let $V$ be a Killing vector field on a compact Riemannian manifold $M$ and along $V$ Ricci curvature is $\Ric(V,V)\le0$, then $V$ is parallel. In fact, if Ricci curvature is negative definite, then Killing vector fields are nothing but zero vector field.
\end{theorem}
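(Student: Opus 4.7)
The plan is to apply the classical Bochner technique to the squared norm $|V|^2 = g(V,V)$. Specifically, I would first establish the Weitzenb\"ock-type identity that any Killing vector field satisfies, namely that the rough (connection) Laplacian of $V$ equals the negative of the Ricci endomorphism applied to $V$, i.e.\ $\Delta V = -\Ric^\sharp(V)$. Once this is in hand, the theorem essentially falls out of integration by parts on the compact manifold.

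To derive the identity $\Delta V = -\Ric^\sharp(V)$, I would start from the characterization that $V$ is Killing if and only if the $(1,1)$-tensor $\nabla V$ is pointwise skew-adjoint, equivalently $g(\nabla_X V, Y) + g(\nabla_Y V, X) = 0$ for all $X,Y$. Combining this skew-symmetry with the general curvature identity $\nabla_X\nabla_Y V - \nabla_Y\nabla_X V - \nabla_{[X,Y]} V = R(X,Y)V$, one can reorganize the second covariant derivatives so that for any vectors $X,Y$ one has $\nabla^2_{X,Y} V = R(V,X)Y$ (this is the standard Kostant-type identity for Killing fields). Taking a trace over an orthonormal frame then yields the desired $\Delta V = -\Ric^\sharp(V)$, with the sign determined by the convention $\Ric(X,Y) = \tr(Z \mapsto R(Z,X)Y)$.

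From here the argument is clean. I would use the elementary identity $\tfrac{1}{2}\Delta|V|^2 = g(V,\Delta V) + |\nabla V|^2$ together with the step above to obtain the Bochner formula
\begin{equation*}
    \tfrac{1}{2}\Delta |V|^2 \;=\; |\nabla V|^2 \;-\; \Ric(V,V).
\end{equation*}
Since $M$ is compact, integrating and applying the divergence theorem eliminates the left hand side, leaving $\int_M |\nabla V|^2\,d\mu = \int_M \Ric(V,V)\,d\mu$. Under the hypothesis $\Ric(V,V)\le 0$ the right side is nonpositive while the left is nonnegative, forcing both integrands to vanish pointwise. Hence $\nabla V \equiv 0$, so $V$ is parallel. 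Finally, if $\Ric$ is negative definite, then $\Ric(V,V) \equiv 0$ forces $V \equiv 0$ at every point.

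The main obstacle I anticipate is not the analytic step (compactness plus divergence theorem is routine) but rather the careful bookkeeping required to prove $\Delta V = -\Ric^\sharp(V)$ from the Killing condition; sign conventions for curvature and Ricci, together with the trace operation on $\nabla^2 V$, have to be aligned consistently. Once that Weitzenb\"ock formula is correctly in place, both the ``parallel'' and the ``trivial'' conclusions follow immediately from the sign of $\Ric(V,V)$.
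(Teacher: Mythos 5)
Your argument is correct: it is the classical Bochner/Weitzenb\"ock proof, and both conclusions follow cleanly from the integral identity $\int_M |\nabla V|^2\,d\mu = \int_M \Ric(V,V)\,d\mu$ once the pointwise formula $\tfrac12\Delta|V|^2 = |\nabla V|^2 - \Ric(V,V)$ is in hand. Be aware that the paper does not actually prove this statement --- it is quoted as Bochner's classical theorem --- but it does prove the generalization to mixed Killing fields by essentially your strategy: derive a pointwise identity expressing $\Ric(V,V)$ as $|\nabla V|^2$ plus divergence terms, integrate over the compact manifold, and use the sign hypothesis. The only genuine difference is the route to that pointwise identity. You go through the Kostant-type formula $\nabla^2 V = R(V,\cdot)\cdot$ and its trace $\Delta V = -\Ric^\sharp(V)$, then use $\tfrac12\Delta|V|^2 = g(V,\Delta V) + |\nabla V|^2$; the paper instead computes $L_VL_Vg$ directly in terms of $\nabla$ and the curvature tensor and traces the resulting identity, obtaining $\Ric(V,V) = \operatorname{div}\nabla_VV + |\nabla V|^2 - f\operatorname{div}V$. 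For a Killing field one has $\operatorname{div}V=0$ and $\nabla_VV = -\tfrac12\operatorname{grad}|V|^2$, so the paper's identity collapses to your Bochner formula; the two derivations are equivalent for this statement. Your route is the more standard and self-contained one for the Killing case, while the paper's Lie-derivative computation is what produces the extra $-f\operatorname{div}V$ term needed for the mixed Killing generalization. Your flagged concern about sign conventions is the right one to watch, but as long as the final formula reads $\tfrac12\Delta|V|^2 = |\nabla V|^2 - \Ric(V,V)$ (positive $|\nabla V|^2$, negative Ricci term), the conclusion is forced.
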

\vspace{0.2cm}
\noindent
Author in \cite{O} introduced and studied 2-Killing vector fields as a vector field $V$ satisfying $L_VL_Vg=0$. The author also presented a necessary and sufficient curvature condition for $V$ to be 2-Killing (see {\it Theorem 2.1 of \cite{O}}). Various geometers since then studied these types of vector fields in almost contact structure \cite{BO2} and in warped product spaces \cite{BO, SU}. 
\vspace{0.2cm}

In this article, we classify a special class of vector fields, namely {\it mixed Killing vector fields} on any (semi-)Riemannian manifold and study the curvature properties for such manifolds.
\vspace{0.2cm}

\begin{definition}
    A vector field $V$ on a (semi-)Riemannian manifold $(M,g)$ is said to be {\it mixed Killing vector field} if there exists non-zero smooth function $f$ on $M$ such that $V$ satisfies 
\begin{equation}\label{e1}
    L_VL_Vg=fL_Vg.
\end{equation}
We call $f$ as {\it mixed Killing factor} of $V$. 
\end{definition}
\vspace{0.2cm}
\noindent
Throughout this paper we denote $f$ as a mixed Killing factor. Note that such $f$ is unique up to non-Killing (and hence non-2-Killing) vector field. If $V$ is mixed Killing vector field, then $V$ is Killing if and only if $V$ is 2-Killing. In a natural way, mixed Killing vector fields are generalizations of Killing vector fields. But a mixed Killing vector field may not be Killing, for instance hemothetic vector field. A vector field $V$ is homothetic if $L_Vg=2\lambda g$, for some constant $\lambda$. For $\lambda\ne 0$, homothetic vector fields are mixed Killing for non-zero constant function $f=2\lambda$. Thus it generalizes the class of conformal vector field on the manifold. It is noteworthy that, a conformal vector field $V$ with conformal factor $\lambda$ (that is, $L_Vg=2\lambda g$) is mixed Killing with $f=2\lambda$ if and only if $\lambda$ is constant along the flow of $V$.

We call a mixed Killing vector field $V$ {\it proper} if $V$ is not Killing. Proper mixed Killing vector fields $V$ generate affine transformations, under which the metric doesn't remain constant, rather it is proportional to the rate of change of the metric along $V$. The flow of such $V$, which properly contains the homotheties, places a high constraints on the curvature properties of the manifold.

This motivates us to study the mixed Killing vector fields extensively. This article is organized in the following manner. In Section \ref{sec2}, we establish the curvature properties of the Riemannian manifolds having mixed Killing vector fields. Proceeding further we study these vector field in the framework of a special types of almost contact metric structure, namely almost coK\"{a}hler structure. We present a brief overview of almost coK\"{a}hler manifolds in Section \ref{sec3}. Section \ref{sec4} is devoted in the classifications of almost coK\"{a}hler manifolds with the Reeb vector field $\xi$ as mixed Killing vector field. In fact we show that on any almost coK\"{a}hkler manifold, $\xi$ is mixed Killing if and only if $\xi$ is parallel (see Theorem \ref{t1}). Therefore we completely characterize three dimensional almost coK\"{a}haler manifolds with $\xi$ mixed Killing (see Theorem \ref{t7}). We also find the necessary condition for any vector fields, which are pointwise collinear with $\xi$ to be mixed Killing vector field. Moving ahead we characterize $\eta$-Einstein almost coK\"{a}hler manifolds and $(\kappa,\mu)$-almost coK\"{a}hler manifolds when $\xi$ is mixed Killing in subsection \ref{ssec1} and \ref{ssec2} respectively. We also deploy few model examples in subsection \ref{ssec3}. Finally we conclude our article with few summary statements and future scope of this study.

\vspace{0.3cm}

\section{On mixed Killing vector field}\label{sec2}
Covariant second derivative is explicitly connected to the curvature tensor of any (semi-)Riemannian manifold. Curvature tensor plays the pivotal role in studying the geometry. Therefore finding such mixed Killing vector fields pass the information for the geometry of the manifold. For instance, existence of non-trivial conformal vector field on $(M,g)$ may assert the fact that $M$ is isometric to either sphere or flat space. Where as complex hyperbolic space doesn't contain any non-trivial conformal vector field \cite{SS}.

\vspace{0.3cm}
\begin{theorem}
    A mixed Killing vector field $V$ on $(M,g)$ is invariant under a conformal change of the metric $g\to g_\rho=\rho g$ if and only if $2(V\rho)L_Vg=[f(V\rho)-V(V\rho)]g,$ where $\rho$ is a positive smooth function and $f$ is the mixed Killing factor.
\end{theorem}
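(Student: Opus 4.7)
The plan is to unpack the mixed Killing equation for the conformally rescaled metric $g_\rho = \rho g$ directly and compare it with the original equation for $g$. By \emph{invariance} I interpret the statement to mean that $V$ remains mixed Killing with respect to $g_\rho$ with the \emph{same} mixed Killing factor $f$, i.e.\ $L_V L_V g_\rho = f\, L_V g_\rho$.

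The first step is to compute $L_V g_\rho$. Since $\rho$ is a scalar function and the Lie derivative obeys the Leibniz rule on tensor products,
\[
L_V g_\rho = (V\rho)\, g + \rho\, L_V g.
\]
Iterating and using $L_V h = V(h)$ on scalar functions $h$ gives
\[
L_V L_V g_\rho = V(V\rho)\, g + 2(V\rho)\, L_V g + \rho\, L_V L_V g.
\]
Substituting the hypothesis $L_V L_V g = f\, L_V g$ rewrites the right-hand side as $V(V\rho)\, g + 2(V\rho)\, L_V g + \rho f\, L_V g$.

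Equating this with $f\, L_V g_\rho = f(V\rho)\, g + \rho f\, L_V g$, the common term $\rho f\, L_V g$ cancels from both sides, leaving precisely
\[
2(V\rho)\, L_V g = [f(V\rho) - V(V\rho)]\, g,
\]
which is the stated condition. Each step above is reversible, so both directions of the biconditional follow simultaneously. The only point requiring any care is tracking the two copies of $(V\rho)\, L_V g$ that appear when one iterates the Leibniz rule for the scalar-tensor product $\rho g$; no identity subtler than $L_V(\rho T) = (V\rho)\,T + \rho\,L_V T$ is needed, so no substantive obstacle is anticipated.
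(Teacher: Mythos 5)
Your proposal is correct and follows essentially the same route as the paper: compute $L_Vg_\rho=(V\rho)g+\rho L_Vg$ and $L_VL_Vg_\rho=V(V\rho)g+2(V\rho)L_Vg+\rho L_VL_Vg$, then substitute $L_VL_Vg=fL_Vg$ and cancel to obtain the stated condition. Your explicit remark that ``invariant'' means mixed Killing for $g_\rho$ with the \emph{same} factor $f$ is a useful clarification the paper leaves implicit, but the argument is otherwise identical.
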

\begin{proof}
    For $g_\rho=\rho g$, we have by a straight forward computation
    \begin{eqnarray*}
        L_Vg_\rho&=&\rho L_Vg+(V\rho)g\\
        L_VL_Vg_\rho&=&\rho L_VL_Vg+2(V\rho)L_Vg+V(V\rho)g.
    \end{eqnarray*}
    Therefore $V$ is mixed Killing on $(M,g_\rho)$ if and only if $L_VL_Vg_\rho=fL_Vg_\rho$, that is, if and only if $2(V\rho)L_Vg=[f(V\rho)-V(V\rho)]g$. 
\end{proof}

\vspace{0.3cm}

Let $\omega$ be the dual 1-form of the smooth vector field $V$, that is $\omega(X)=g(X,V)$ on $(M,g)$ for all $X\in\chi(M)$, collection of all vector fields on $M$. Then it is easy to see that 
\begin{eqnarray*}
    2d\omega(X,Y)&=&X\omega(Y)-Y\omega(X)-\omega([X,Y])\\
    &=&g(\nabla_XV,Y)-g(X,\nabla_YV).
\end{eqnarray*}
Now we define a skew-symmetric $(1,1)$ tensor field $\phi$ on $(M,g)$ by 
\begin{eqnarray}\label{e2}
    g(X,\phi Y)=2d\omega(X,Y)=g(\nabla_XV,Y)-g(X,\nabla_YV).
\end{eqnarray}
Using this we can calculate 
\begin{eqnarray}\label{e3}
    (L_Vg)(X,Y)&=&g(\nabla_XV,Y)+g(X,\nabla_YV)\nonumber\\
    &=&g(2\nabla_YV+\phi Y, X).
\end{eqnarray}
Recall that 
\begin{eqnarray}\label{e4}
    L_V((L_Vg)(X,Y))=(L_VL_Vg)(X,Y)+(L_Vg)(L_VX,Y)+(L_Vg)(X,L_VY).
\end{eqnarray}
Plugging \eqref{e3} in the above one can then have
\begin{eqnarray}\label{e5}
    (L_VL_Vg)(X,Y)&=&2g(\nabla_V\nabla_YV,X)-2g(\nabla_{\nabla_VY}V,X)+3g(\phi\nabla_YV,X)\nonumber\\
    &+&g((\nabla_V\phi)Y,X)+4g(\nabla_{\nabla_YV}V,X)+g(\phi^2Y,X)+g(\nabla_{\phi Y}V,X).
\end{eqnarray}
Note that, the Riemann curvature tensor $R$ can be written as 
    \begin{equation*}
        R(V,Y)=[\nabla_V,\nabla_Y]-\nabla_{[V,Y]}.
    \end{equation*}
 Availing this in \eqref{e4}, we acquire
 \begin{eqnarray}\label{e6}
     (L_VL_Vg)(X,Y)&=&2g(R(V,Y)V,X)+2g(\nabla_Y\nabla_VV,X)+2g(\nabla_{\nabla_YV}V,X)\nonumber\\
     &+&3g(\phi\nabla_YV,X)+g(\phi^2Y,X)+g(\nabla_{\phi Y}V,X)
 \end{eqnarray}
Therefore the following proposition can be stated.
\vspace{0.2cm}
\begin{proposition}
    A vector field $V$ is mixed Killing if and only if it satisfies, for any vector field $Y$ on $(M,g)$,
    \begin{eqnarray}\label{e7}
        2R(V,Y)V&=&2f\nabla_YV+f\phi Y-2\nabla_Y\nabla_VV-2\nabla_{\nabla_YV}V\nonumber\\
        &-&3\phi\nabla_YV-\phi^2Y-\nabla_{\phi Y}V.
    \end{eqnarray}
\end{proposition}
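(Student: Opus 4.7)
The plan is a direct unwinding of the two identities derived just above the statement, namely the expression \eqref{e3} for $L_Vg$ and the expression \eqref{e6} for $L_VL_Vg$. Together with the defining relation \eqref{e1}, they already contain all the geometric content needed; the remaining work is purely algebraic, consisting of stripping a common $X$-slot by non-degeneracy of $g$.

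First, I would substitute \eqref{e6} into the left-hand side of \eqref{e1} and $f$ times \eqref{e3} into the right-hand side. Since every term on either side takes the form $g(\,\cdot\,,X)$ for some vector field depending on $Y$, $V$, $\phi$, and the curvature, equating the two sides yields a single identity $g(W(Y),X)=0$ holding for all vector fields $X,Y$, where $W(Y)$ is the linear combination obtained by bringing $2f\nabla_YV+f\phi Y$ from $fL_Vg$ to the left and combining it with the six summands produced by \eqref{e6}.

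Next, I would invoke non-degeneracy of $g$: the relation $g(W(Y),X)=0$ for every $X$ forces $W(Y)=0$ as a vector field. Solving for $2R(V,Y)V$ — that is, keeping the curvature term on the left and transferring all remaining contributions to the right — produces precisely \eqref{e7}. The converse direction simply retraces this chain in reverse: given \eqref{e7}, pair both sides with an arbitrary $X$ under $g$ and reassemble $(L_VL_Vg)(X,Y)$ via \eqref{e6} and $f(L_Vg)(X,Y)$ via \eqref{e3}, thereby recovering \eqref{e1}.

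There is no real conceptual obstacle in this argument, since the derivation of \eqref{e6} from \eqref{e4} via the curvature identity $R(V,Y)=[\nabla_V,\nabla_Y]-\nabla_{[V,Y]}$ has already absorbed the hard step. The only delicate point is coefficient and sign bookkeeping when transferring terms across the equality — in particular matching the factor $2$ on $R(V,Y)V$, the factors $2f$ and $f$ on $\nabla_YV$ and $\phi Y$, and the signs on $-2\nabla_Y\nabla_VV$, $-2\nabla_{\nabla_YV}V$, $-3\phi\nabla_YV$, $-\phi^2Y$ and $-\nabla_{\phi Y}V$ — but since each contribution appears as an isolated summand in \eqref{e3} and \eqref{e6}, this reduces to routine inspection.
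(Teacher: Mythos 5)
Your proposal is correct and coincides with the paper's (implicit) argument: the paper derives \eqref{e6} precisely so that equating it with $f$ times \eqref{e3} and stripping the $X$-slot by non-degeneracy of $g$ yields \eqref{e7}, with the converse obtained by reversing the substitution. The coefficient bookkeeping you describe checks out against the displayed formulas.
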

\vspace{0.2cm}

\begin{remark}
    As $\phi$ is skew-symmetric, $g(\phi Y,Y)=0$. Also using \eqref{e2}, one can see
    \begin{eqnarray*}
        g(\nabla_{\nabla_YV}V,Y)&=&g(\nabla_YV,\phi Y)+g(\nabla_YV,\nabla_YV),\\
        g(\nabla_{\phi Y}V,Y)&=&g(\nabla_YV,\phi Y)-g(Y,\phi^2Y). 
    \end{eqnarray*}
    But then inner product with $Y$ of \eqref{e6} and making use of the above, we get for all $Y$ in $\chi(M)$, 
    \begin{equation}\label{e8}
        g(R(Y,V)V,Y)=g(\nabla_Y\nabla_VV,Y)+g(\nabla_YV,\nabla_YV)-fg(\nabla_YV,Y).
    \end{equation}
    This equation can also be treated as a necessary and sufficient condition for a vector field $V$ to be mixed Killing.
\end{remark}
\vspace{0.2cm}
\noindent
In contrast to Bochner's theorem \ref{t1} for Killing vector fields, we present a characterization of non-zero mixed Killing vector fields on compact Riemannian manifold with negative definite Ricci curvature. In this context we prove the following.
\vspace{0.2cm}
\begin{theorem}
    Let $V$ be a mixed Killing vector field on a compact Riemannian manifold $(M^n,g)$ and along $V$ Ricci curvature is $\Ric(V,V)\le0$. Then $V$ is parallel (hence Killing) whenever $\int_M|\nabla V|^2dvol_g\ge\int_Mf\operatorname{div}Vdvol_g$. In particular, $V$ is parallel whenever either $f$ is constant or $\operatorname{div}V=0$. 
\end{theorem}
\begin{proof}
    Consider an orthonormal frame of vector fields $\{e_1,e_2,...,e_n\}$ of $T_pM$ at $p\in M$. But then plugging $Y=e_i$ in \eqref{e7} and summing over $i$, we have
    \begin{equation}\label{e9}
        \Ric(V,V)=\operatorname{div}{\nabla_VV}+|\nabla V|^2-f\operatorname{div}{V}.
    \end{equation}
    We now integrate the above over the compact manifold $M$ and applying Green's theorem we obtain
    \begin{equation*}
        \int_M\Ric(V,V)dvol_g=\int_M|\nabla V|^2dvol_g-\int_Mf\operatorname{div}Vdvol_g
    \end{equation*}
    Then $\Ric(V,V)\le 0$ implies $\nabla V=0$ whenever $\int_M|\nabla V|^2dvol_g\ge\int_Mf\operatorname{div}Vdvol_g$.
\end{proof}

\begin{remark}\label{rem2}
    Observe that, if $V=r(x)\frac{d}{dx}$ be an vector field on the real line $\mathbb{R}$ with metric $g=dx^2$, then 
    \begin{eqnarray*}
        (L_Vg)_{00}=2r'(x)~~~\text{and}~~~ (L_VL_Vg)_{00}=2r(x)r''(x)+4r'(x)^2,
    \end{eqnarray*}
    where $r'(x)=\frac{dr}{dx}$ and $r''(x)=\frac{d^2r}{dx^2}$. Note also that, if $x(t)$ is the integral curve of $V$, then $\frac{dx}{dt}=r(x)$. Hence one can write 
    \begin{eqnarray*}
        r'(x)&=&\frac{1}{r(x)}\frac{dr}{dt},\\
        r''(x)&=&-\frac{1}{r(x)^3}\bigg(\frac{dr}{dt}\bigg)^2+\frac{1}{r(x)^2}\frac{d^2r}{dt^2}.
    \end{eqnarray*}
    Moreover, if $V$ is the mixed Killing vector field on the real line with mixed Killing factor $f$ then we have
    $$r(x)r''(x)+2r'(x)^2=2fr'(x).$$
    Availing the values of $r'(x)$ and $r''(x)$, we obtain 
    $$r\frac{d^2r}{dt^2}+\frac{dr}{dt}=2fr\frac{dr}{dt}.$$
    But then we have the solution as
    $$r^2=c\int\exp\big({2\int f dt}\big)dt+c',$$
    for some constants $c,c'$. In particular, if $f$ is constant, then
    $$r^2=\frac{c}{f}\exp(2ft)+c'.$$ 
    This clearly predicts the nonlinearity of the mixed Killing vector fields in higher dimensions (even in Euclidean spaces).
\end{remark}
\vspace{0.3cm}

\section{Almost coK\"{a}hler manifolds: An overview}\label{sec3}
We give a brief overview of almost coK\"{a}hler manifold and recall some results in this section.  An {\it almost contact metric manifold} $M^{2n+1}(\phi,\xi,\eta,g)$ is an odd dimensional (say ${2n+1}$, $n\ge1$) Riemannian manifold $(M,g)$ associated with a {\it characteristic vector field} $\xi$, a global 1-form $\eta$, a structural $(1,1)$ type tensor field $\phi$ and a {\it compatible} metric $g$ such that the following equations hold true for any $Y,X\in \chi(M)$:
\begin{eqnarray}
    \phi^2Y=-Y+\eta(Y)\xi,~~~\eta(\xi)=1,\label{e10}\\
    g(\phi Y,\phi X)=g(Y,X)-\eta(Y)\eta(X).\label{e11}
\end{eqnarray}
 Vector field $\xi$ is called the {\it Reeb vector field}. An almost contact metric manifold is said to be {\it normal} if the {\it Nijenhuis torsion} of type $(1,2)$ of the tensor field $\phi$, given by $[\phi,\phi](X,Y)=\phi^2[X,Y]+[\phi X,\phi Y]-\phi[\phi X,Y]-\phi[X,\phi Y]$ is equal to $-2d\eta(X,Y)\xi$. Define {\it fundamental 2-form}, $\Phi$ on almost contact metric manifold by $\Phi(X,Y)=g(X,\phi Y)$. \par
If both $\eta$ and $\Phi$ are closed, that is $d\eta=0$ and $d\Phi=0$, then the almost contact metric manifold is called {\it almost coK\"{a}hler manifold}. Various authors \cite{DO,Ols1,Ols2,Ols3} studied (almost) coK\"{a}hler manifold in the name of (almost) cosymplectic manifold, and let us take this opportunity to point out that these are the same. A normal almost coK\"{a}hler manifold is called {\it coK\"{a}hler manifold}. It is shown that an almost contact metric manifold is coK\"{a}hler if and only if $\nabla\phi=0$ \cite{Bla}. In other way to say that, when both $\eta$ and $\phi$ are parallel, almost coK\"{a}hler manifold is coK\"{a}hler. 

To proceed further, define two traceless symmetric structural $(1,1)$ tensor field $h$ and $h'$ by $h=\frac{1}{2}L_\xi\phi$ and $h'=h\phi$. Then the following relations hold true on an almost coK\"{a}hler manifold \cite{Ols2,Ols3}.
\begin{align}
    &h\xi=0,~~~h\phi=-\phi h,~~~\nabla_\xi\phi=0\label{e12}\\
    &\nabla\xi=h',~~~(\nabla_Y\eta)X=g(h'X,Y),~~~\operatorname{div}\xi=0, \label{e13}\\
    &R(Y,X)\xi=(\nabla_Yh')X-(\nabla_Xh')Y,\label{e14}\\
    &\Ric(\xi,\xi)=-\tr(h^2).\label{e15}
\end{align}
\noindent
Note that, $2n$ dimensional distribution $\mathcal{D}=\ker\eta=\{Y\in\chi(M):g(Y,\xi)=0\}$ of an almost coK\"{a}hler manifold $M^{2n+1}$ is integrable and hence defines a foliation perpendicular to the Reeb foliation. According to Olszak \cite{Ols1}, {\it an almost coK\"{a}hler manifold is said to have K\"{a}hlerian leaves if the leaves of the distribution $\mathcal{D}$ are K\"{a}hler manifolds}. 
He proved that {\it an almost coK\"{a}hler manifold $M$ is an almost coK\"{a}hler with K\"{a}hlerian leaves if and only if $(\nabla_Y\phi)X=g(X,hY)\xi-\eta(X)hY$ holds on $M$}. In fact he proved that an almost coK\"{a}hler manifolds with K\"{a}hlerian leaves is coK\"{a}hler if and only if $\xi$ is parallel (i.e. $h=0$). As a 3 dimensional almost coK\"{a}hler manifold has K\"{a}hlerian leaves, hence it is coK\"{a}hler if and only if $h=0$ (hence $\xi$ is parallel) \cite{Per}.\par
We say that an almost coK\"{a}hler manifold $M^{2n+1}(\phi,\xi,\eta,g)$ is  {\it$\eta$-Einstein} if the Ricci operator $Q$ of $M$ satisfies 
\begin{equation}\label{e16}
    QY=aY+b\eta(Y)\xi,
\end{equation}
where the smooth functions $a$ and $b$ are given by $a=\frac{1}{2n}\big(r+\tr(h^2)\big)$ and $b=-\frac{1}{2n}\big(r+(2n+1)\tr(h^2)\big)$.

An almost coK\"{a}hler manifold $M^{2n+1}(\phi,\xi,\eta,g)$ is said to be {\it $(\kappa,\mu)$-almost coK\"{a}hler manifold} for some smooth functions $\kappa,\mu$, if the Riemann curvature tensor of $M$ satisfies,  
\begin{equation}\label{e17}
    R(X,Y)\xi=\kappa(\eta(Y)X-\eta(X)Y)+\mu(\eta(Y)hX-\eta(X)hY).
\end{equation}
Particularly, if $\mu=0$, $\xi$ belongs to {\it $\kappa$-nullity distribution}, a subbundle $N(\kappa)$ defined by for $p\in M$ \cite{Bla} $$N_p(\kappa)=\{Z\in T_pM:R(X,Y)Z=\kappa(g(Y,Z)X-g(X,Z)Y), X,Y\in T_pM\}.$$
And it can be shown that $\kappa=0$ if and only if $\xi$ is Killing.
One can see on $(\kappa,\mu)$-almost coK\"{a}hler manifold that
\begin{equation}\label{e18}
    Q\xi=2n\kappa\xi~~\text{and}~~h^2=\kappa\phi^2.
\end{equation}
Hence $\kappa\le 0$ and $M$ is coK\"{a}hler if and only if $\kappa=0$ \cite{CDY}. We say a $(\kappa,\mu)$-almost coK\"{a}hler manifold is {\it non-coK\"{a}hler} for $\kappa<0$. 

$\mathcal{D}$-homothetic deformation was introduced \cite{T} in search of a different contact metric structure deforming an existing structure and under that structure the contact subbundle $\mathcal{D}$ is homothetic. Consider an almost coK\"{a}hler manifold $M(\phi,\xi,\eta,g)$ and a smooth function $u\ne0$ varying along $\xi$ only. Then one can always find a different structure $(\phi',\xi',\eta',g')$ on $M$ defined by \cite{DO}
\begin{equation*}
    \phi'=\phi,~~~\xi'=\frac{1}{u}\xi,~~~\eta'=u\eta,~~~g'=cg+(u^2-c)\eta\otimes\eta,
\end{equation*}
$c$ is some positive constant.
This transformation is known as $\mathcal{D}$-homothetic deformation.

Here it is worth mentioning that {\it a $(\kappa,\mu,\nu)$-almost coK\"{a}hler manifold with $\kappa<0$ can be transformed $\mathcal{D}$-homothetically to a $(-1,\frac{\mu}{\sqrt{-\kappa}},0)$-almost coK\"{a}hler manifold} (see Proposition 7 of \cite{DO}). Also note that \cite{DO}, if $\kappa<0$, then $\kappa$ is constant if and only if $\nu=0$. Therefore for a non-coK\"{a}hler $(\kappa,\mu)$-almost coK\"{a}hler manifold, $\kappa$ is constant. We refer \cite{CDY,DO,GB} for more studies on $(\kappa,\mu)$-almost coK\"{a}hler manifold.

\vspace{0.2cm}
\noindent
Note that the Ricci operator $Q$ of a 3 dimensional  $(\kappa,\mu)$-almost coK\"{a}hler manifold can be written as \cite{CM}
\begin{equation}\label{e20}
    QY=\mu hY+\bigg(\frac{r}{2}-\kappa\bigg)Y+\bigg(3\kappa-\frac{r}{2}\bigg)\eta(Y)\xi.
\end{equation}
In {\it Theorem 1 of \cite{DO}}, authors proved that non-coK\"{a}hler $(\kappa,\mu)$-almost coK\"{a}hler manifolds are almost coK\"{a}hler with K\"{a}hlerian leaves. 
Therefore a $(\kappa,\mu)$-almost coK\"{a}hler manifold is coK\"{a}hler if and only if $\xi$ is parallel $(h=0)$ \cite{Ols1}. 

\vspace{0.3cm}

\section{Almost coK\"{a}hler manifolds with $\xi$ mixed Killing}\label{sec4}
It is known that on any almost coK\"{a}hler manifold $M(\phi,\xi,\eta,g)$, the Reeb vector field $\xi$ is Killing if and only if $h=0$. In this section, we generalize this in terms of mixed Killing vector field and we show that 

\vspace{0.2cm}

\begin{theorem}\label{t1}
    On any almost coK\"{a}hler manifold $M(\phi,\xi,\eta,g)$, $\xi$ is mixed Killing  if and only if $h=0$. Moreover, $M$ is locally a Riemannian product of Real line and an almost K\"{a}hler manifold. 
\end{theorem}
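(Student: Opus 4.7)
The plan is to substitute $V = \xi$ into the necessary and sufficient condition \eqref{e8} and then take a pointwise trace over an orthonormal frame to force $h=0$. The key simplification is provided by \eqref{e12}--\eqref{e13}: $\nabla_Y\xi = h'Y$ with $h' = h\phi$, and $\nabla_\xi\xi = h'\xi = h\phi\xi = 0$ because $\phi\xi = 0$ and $h\xi = 0$. So \eqref{e8} reduces, for every $Y\in\chi(M)$, to
\begin{equation*}
    g(R(Y,\xi)\xi,Y) \;=\; |h'Y|^2 - f\,g(h'Y,Y),
\end{equation*}
where $f$ is the mixed Killing factor of $\xi$.

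I would then sum this identity over a local orthonormal frame $\{e_1,\ldots,e_{2n+1}\}$. The left-hand side is $\Ric(\xi,\xi)$, equal to $-\tr(h^2)$ by \eqref{e15}. On the right, the anti-commutation $h\phi=-\phi h$ together with the symmetry of $h$ makes $h'$ symmetric, so $\sum_i|h'e_i|^2 = \tr((h')^2)$; a brief computation using $\phi h = -h\phi$, $\phi^2 = -\Id + \eta\otimes\xi$ and $h\xi=0$ yields $(h')^2 = h^2$, giving $\tr((h')^2) = \tr(h^2)$. The cross term contributes $f\,\tr(h\phi)$, which vanishes because $\tr(h\phi) = -\tr(\phi h) = -\tr(h\phi)$. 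Therefore $-\tr(h^2) = \tr(h^2)$, i.e.\ $\tr(h^2) = 0$, and since $h$ is symmetric this forces $h=0$.

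The converse and the splitting are then short. If $h=0$, then $\nabla\xi = h' = 0$, so $\xi$ is parallel and $L_\xi g\equiv 0$; both sides of \eqref{e1} vanish identically, making $\xi$ mixed Killing for any nonzero $f$. Parallelness of $\xi$ makes $\R\xi$ a parallel line field, hence its orthogonal complement $\mathcal{D} = \ker\eta$ is also parallel, and it is already integrable on an almost coK\"{a}hler manifold. The local de Rham decomposition theorem then splits $M$ around every point as $I\times L$, where $I\subset\R$ is an integral curve of $\xi$ and $L$ is a leaf of $\mathcal{D}$. On $L$ the restriction of $\phi$ is an almost complex structure compatible with $g|_L$ (since $\phi^2|_{\mathcal{D}} = -\Id$), and $\Phi|_L$ is closed because $d\Phi = 0$ on $M$; so $L$ is almost K\"{a}hler.

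The delicate step is really the trace computation in the second paragraph, and specifically the observation that $f$ enters only through $\tr(h\phi)$, which vanishes by the anti-commutation. This cancellation is what makes the conclusion ``$h=0$'' completely insensitive to which nonzero factor $f$ implements the mixed Killing condition, and once it is spotted the rest of the argument is essentially mechanical.
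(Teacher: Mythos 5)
Your argument is correct and is essentially the paper's: both proofs reduce the mixed Killing hypothesis to the pointwise identity $g(R(Y,\xi)\xi,Y)=|h'Y|^2-fg(h'Y,Y)$, trace it over an orthonormal frame to get $\Ric(\xi,\xi)=\tr(h^2)$ (the $f$-term dying because $h'=h\phi$ is traceless), and compare with the structural identity $\Ric(\xi,\xi)=-\tr(h^2)$ of \eqref{e15} to force $h=0$. The only cosmetic differences are that you specialize the general condition \eqref{e8} directly, whereas the paper rederives the same formula via Lemma \ref{l1} and the curvature identity \eqref{e14}, and that your de Rham splitting argument replaces the paper's citation of Theorem 3.11 of \cite{CDY}; both are fine.
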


\vspace{0.2cm}
\noindent
But before going into the proof of this theorem, we state the following lemma.
\vspace{0.2cm}
\begin{lemma}\label{l1}
    On any almost coK\"{a}hler manifold $M(\phi,\xi,\eta,g)$, the followings hold.
    \begin{enumerate}
        \item $(L_\xi g)(X,Y)=2g(h'X.Y)$,
        \item $(L_\xi L_\xi g)(X,Y)=4g(h^2X,Y)+2g((\nabla_\xi h')X,Y)$.
    \end{enumerate}
\end{lemma}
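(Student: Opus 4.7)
The plan is to derive both identities directly from the defining formula $L_V g(X,Y) = g(\nabla_X V, Y) + g(\nabla_Y V, X)$, using the almost coK\"ahler structure equations summarized in \eqref{e12}--\eqref{e13}. The key inputs I intend to exploit are $\nabla\xi = h'$, the symmetry of $h$, the anticommutation $h\phi = -\phi h$, and the algebraic identity $\phi^2 = -\Id + \eta\otimes\xi$ together with $h\xi=0$.

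For part (1), I would simply expand
\begin{equation*}
(L_\xi g)(X,Y) = g(\nabla_X\xi,Y) + g(X,\nabla_Y\xi) = g(h'X,Y) + g(X,h'Y),
\end{equation*}
and then observe that $h'$ is self-adjoint: indeed $g(h'X,Y) = g(h\phi X,Y) = g(\phi X,hY) = -g(hX,\phi Y) = g(X,h\phi Y) = g(X,h'Y)$, using $h$ symmetric and $h\phi = -\phi h$. This yields $(L_\xi g)(X,Y) = 2g(h'X,Y)$ immediately.

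For part (2), the natural step is to apply $L_\xi$ to the identity from part (1) and use the standard Leibniz rule
\begin{equation*}
\xi\bigl(2g(h'X,Y)\bigr) = (L_\xi L_\xi g)(X,Y) + (L_\xi g)([\xi,X],Y) + (L_\xi g)(X,[\xi,Y]).
\end{equation*}
I would expand the left side by metric compatibility, writing $\xi(g(h'X,Y)) = g((\nabla_\xi h')X,Y) + g(h'\nabla_\xi X,Y) + g(h'X,\nabla_\xi Y)$, and replace each Lie bracket using $[\xi,X] = \nabla_\xi X - h'X$. Substituting part (1) into the bracket terms, the pieces containing $\nabla_\xi X$ and $\nabla_\xi Y$ cancel, and what survives is $2g((\nabla_\xi h')X,Y) + 2g((h')^2 X,Y) + 2g(h'X,h'Y)$. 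Since $h'$ is symmetric, $g(h'X,h'Y) = g((h')^2X,Y)$, giving a coefficient of $4$ on $(h')^2$.

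The final reduction I need is $(h')^2 = h^2$, which follows from $(h\phi)(h\phi) = -h^2\phi^2 = h^2 - (h^2\xi)\otimes\eta = h^2$ thanks to $h\phi = -\phi h$, $\phi^2 = -\Id + \eta\otimes\xi$, and $h\xi=0$. Combining everything gives the stated formula. I do not anticipate a genuine obstacle here; the only mild care is keeping track of sign conventions in the Leibniz rule for $L_\xi$ on tensor fields and in the bracket--covariant-derivative conversion, after which the cancellations are automatic.
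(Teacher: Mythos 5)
Your proof is correct and takes essentially the same route as the paper's: part (1) is the direct expansion via $\nabla\xi=h'$ and the symmetry of $h'$, and part (2) applies the Leibniz rule \eqref{e4} with $V=\xi$ and reduces via $h'^2=h^2$, exactly as the paper does. One intermediate link in your symmetry chain, $g(\phi X,hY)=-g(hX,\phi Y)$, carries the wrong sign (the correct sequence is $g(\phi X,hY)=-g(X,\phi hY)=g(X,h\phi Y)$, using $\phi$ skew and $\phi h=-h\phi$), but this does not affect anything since the endpoints of the chain are right and the paper already records that $h'$ is symmetric.
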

\begin{proof}
    The proof of first one is straight forward using first result of \eqref{e13}. While for the second, we use the first part and \eqref{e4} for $V=\xi$ to compute
    \begin{eqnarray*}
        (L_\xi L_\xi g)(X,Y)=4g(h'^2X,Y)+2g((\nabla_\xi h')X,Y).
    \end{eqnarray*}
    Observe that $h'^2=h^2$ and hence we have the required result.
\end{proof}

\begin{proof}[Proof of Theorem \ref{t1}]
Consider $\xi$ to be mixed Killing vector field on $M$. Then for some mixed Killing factor $f$, we have
$$L_\xi L_\xi g=fL_\xi g.$$
Then using the above lemma, we obtain 
\begin{equation}\label{e23}
    \nabla_\xi h'=fh'-2h^2.
\end{equation}
It is known from \eqref{e14} that $$R(Y,\xi)\xi=(\nabla_Yh')\xi-(\nabla_\xi h')Y.$$
As $h'\xi=0$ and $h'^2=h^2$ we have $(\nabla_Yh')\xi=-h^2Y$. Availing this and \eqref{e23} in the above, we then obtain
    \begin{equation}\label{e24}
        R(Y,\xi)\xi=-fh\phi Y+h^2Y.
    \end{equation}
But again contraction of this with respect to $Y$ gives $\Ric(\xi,\xi)=\tr h^2$. Comparing this with \eqref{e15}, we can affirm $\tr h^2=0$. Since $h$ is symmetric, we confirm $h=0.$ Hence $\xi$ is parallel and therefore Killing. 
 Moreover, in view of {\it Theorem 3.11 of \cite{CDY}}, $M$ is locally a Riemmanian product of real line and an almost K\"{a}hler manifold. 
\end{proof}

\vspace{0.2cm}

As a consequence of the above theorem, as $h=0$ we have $R(Y,X)\xi=0$ for all $Y,X\in\chi(M)$. Taking covariant derivative of this along $Z\in\chi(M)$, $(\nabla_ZR)(Y,X)\xi=0.$ In other words, the scalar curvature $r$ of the manifold $M$ is constant along $\xi$. Particularly in dimension 3, Weyl conformal curvature tensor $W$ vanishes identically, where 
\begin{eqnarray*}
    W(X,Y)Z&=&R(X,Y)Z-[\Ric(Y,Z)X-\Ric(X,Z)Y+g(Y,Z)QX-g(X,Z)QY]\nonumber\\
    &+&\frac{r}{2}[g(Y,Z)X-g(X,Z)Y].
\end{eqnarray*}
Therefore, Riemann curvature tensor can be given by
\begin{eqnarray*}
    R(X,Y)Z&=&[\Ric(Y,Z)X-\Ric(X,Z)Y+g(Y,Z)QX-g(X,Z)QY]\\
    &-&\frac{r}{2}[g(Y,Z)X-g(X,Z)Y].
\end{eqnarray*}
But then for $Z=\xi$ and because $\xi$ is mixed Killing, we have
$$\eta(Y)QX-\eta(X)QY=\frac{r}{2}[\eta(Y)X-\eta(X)Y].$$
We now replace $X$ by $\phi X$ to get $Q\phi X=\frac{r}{2}\phi X.$ Since $Q\xi=0$, we finally have $QX=\frac{r}{2}(X-\eta(X)\xi)$. Hence the manifold is $\eta$-Einstein. Moreover, such manifold has constant scalar curvature $r$, as proved in Theorem \ref{t10}. Also the manifold is locally symmetric (satisfies $(\nabla_YQ)X=(\nabla_XQ)Y$). Then according to {\it Proposition 3.1 of \cite{Per2}},\\ $M^3$ is locally isometric to either of these 
    \begin{enumerate}
        \item Euclidean space $\mathbb{R}^3$
        \item  Product of 1 dimensional manifold and a 2 dimensional K\"{a}hler surface with constant curvature. 
    \end{enumerate}

\vspace{0.2cm}

\begin{theorem}\label{t7}
    Consider an almost coK\"{a}hler manifold $M^3(\phi,\xi,\eta,g)$ with $\xi$ mixed Killing. Then $M^3$ is coK\"{a}hler and it is isometric to either $\mathbb{R}^3$ or $\mathbb{R}\times K^2$, $K^2$ being the 2 dimensional K\"{a}hler surface of constant curvature.
\end{theorem}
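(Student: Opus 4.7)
The plan is to deduce this three-dimensional classification by combining Theorem~\ref{t1} with the special features of dimension three (vanishing of the Weyl tensor and K\"{a}hlerian leaves) and then invoking a known classification result of Perrone for suitable three-dimensional almost coK\"{a}hler manifolds. The author essentially lays out the argument in the paragraph preceding the theorem; my task is to structure it as a proof.

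First, I would apply Theorem~\ref{t1} directly: since $\xi$ is mixed Killing on $M^3$, we immediately have $h=0$, so $\nabla\xi=h'=0$ by \eqref{e13}. Since a three-dimensional almost coK\"{a}hler manifold automatically has K\"{a}hlerian leaves, the identity $(\nabla_Y\phi)X=g(X,hY)\xi-\eta(X)hY$ (quoted from Olszak in Section~\ref{sec3}) reduces to $\nabla\phi=0$, so $M^3$ is coK\"{a}hler. This already gives $R(X,Y)\xi=0$ and $Q\xi=0$.

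Next, I would exploit the vanishing of the Weyl conformal curvature tensor in dimension three. Writing $R$ in the standard three-dimensional form in terms of $Q$, $g$ and $r$, and setting $Z=\xi$, the identities $R(X,Y)\xi=0$ and $Q\xi=0$ force
\[
\eta(Y)QX-\eta(X)QY=\tfrac{r}{2}\bigl[\eta(Y)X-\eta(X)Y\bigr].
\]
Replacing $X$ by $\phi X$ and using $\eta(\phi X)=0$ with $Q\xi=0$ gives $Q\phi X=\tfrac{r}{2}\phi X$, and then applying $\phi$ once more together with \eqref{e10} yields $QX=\tfrac{r}{2}\bigl(X-\eta(X)\xi\bigr)$. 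Thus $M^3$ is $\eta$-Einstein.

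The remaining step is to upgrade this to the desired rigidity. Because $\xi$ is parallel, $r$ is constant along $\xi$; constancy in the transverse directions comes from the subsection on $\eta$-Einstein manifolds, specifically Theorem~\ref{t10}, which guarantees constant scalar curvature under the mixed Killing hypothesis. With $r$ constant and $Q$ of the form $\tfrac{r}{2}(I-\eta\otimes\xi)$ on a manifold with $\nabla\xi=0$, a direct computation of $(\nabla_YQ)X$ shows local symmetry, i.e.\ $(\nabla_YQ)X=(\nabla_XQ)Y$. I would then invoke Proposition~3.1 of \cite{Per2}, which classifies three-dimensional coK\"{a}hler manifolds satisfying this local-symmetry condition as being locally isometric to either $\mathbb{R}^3$ or $\mathbb{R}\times K^2$ with $K^2$ a K\"{a}hler surface of constant curvature. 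Since $\xi$ is a global parallel unit vector field on $M^3$, the de~Rham decomposition makes the Riemannian product global, giving the stated conclusion.

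The main conceptual step, and the only place where real content enters beyond Theorem~\ref{t1}, is the derivation of the $\eta$-Einstein form of $Q$ from the three-dimensional Weyl-tensor identity; the rest is bookkeeping and an appeal to Perrone's classification. The subtle point I would need to check carefully is the justification that $r$ is globally constant (not merely constant along $\xi$), which is why the reference to Theorem~\ref{t10} is essential.
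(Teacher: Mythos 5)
Your proposal is correct and follows essentially the same route as the paper: Theorem \ref{t1} gives $h=0$ (hence coK\"ahler in dimension 3), the vanishing of the Weyl tensor yields the $\eta$-Einstein form of $Q$, Theorem \ref{t10} upgrades $\xi r=0$ to global constancy of $r$, and Perrone's Proposition 3.1 of \cite{Per2} then gives the classification. The paper's own proof is exactly the discussion in the paragraph preceding the theorem, which you have faithfully reorganized.
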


\vspace{0.2cm}
\noindent
Recall from previous section that an almost coK\"{a}hler manifold with K\"{a}hlerian leaves is coK\"{a}hler if and only if $h=0$. A direct consequence of this statement can be asserted as follows.
\vspace{0.2cm}

\begin{theorem}\label{t8}
    Consider an almost coK\"{a}hler manifold $M(\phi,\xi,\eta,g)$ with K\"{a}hlerian leaves. Then $M$ is coK\"{a}hler if and only if $\xi$ is mixed Killing.
\end{theorem}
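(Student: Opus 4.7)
The plan is to observe that Theorem \ref{t8} follows by chaining two biconditionals that are already available: Olszak's criterion recalled just above the statement, and Theorem \ref{t1} proved earlier in this section. Specifically, on any almost coK\"{a}hler manifold with K\"{a}hlerian leaves, Olszak's result gives $M$ coK\"{a}hler $\iff$ $h=0$, while Theorem \ref{t1} gives, on any almost coK\"{a}hler manifold at all, $\xi$ mixed Killing $\iff$ $h=0$. Concatenating these two equivalences under the K\"{a}hlerian-leaves hypothesis yields $M$ coK\"{a}hler $\iff$ $\xi$ mixed Killing.

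For the forward implication, I would argue directly without re-invoking Olszak: if $M$ is coK\"{a}hler then $\nabla\phi=0$, so in particular $h=\tfrac12 L_\xi\phi=0$ and $\nabla\xi=h'=h\phi=0$, i.e.\ $\xi$ is parallel. A parallel vector field satisfies $L_\xi g=0$, hence $L_\xi L_\xi g=0=f\cdot L_\xi g$ for any nowhere-zero smooth $f$, so $\xi$ fits the definition of mixed Killing trivially. For the reverse implication, assume $\xi$ is mixed Killing. Then Theorem \ref{t1} forces $h=0$, and since $M$ has K\"{a}hlerian leaves, Olszak's characterization recalled above the statement promotes $h=0$ to $\nabla\phi=0$, i.e.\ normality of the almost coK\"{a}hler structure, so $M$ is coK\"{a}hler.

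Since both ingredients are already in hand, there is no genuine obstacle; the only thing worth being careful about is the degenerate case in the forward direction. One might worry that a Killing $\xi$ makes the defining equation $L_\xi L_\xi g=fL_\xi g$ reduce to $0=0$, leaving $f$ undetermined; but the definition only requires the \emph{existence} of some nonzero smooth $f$, and any constant $f\ne 0$ works, so Killing vector fields (in particular parallel ones) are automatically mixed Killing. With that caveat noted, the proof is a two-line concatenation of the two cited equivalences.
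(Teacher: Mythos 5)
Your proposal is correct and matches the paper's reasoning exactly: the paper presents Theorem \ref{t8} as a direct consequence of Olszak's criterion ($M$ coK\"ahler $\iff h=0$ on manifolds with K\"ahlerian leaves) combined with Theorem \ref{t1} ($\xi$ mixed Killing $\iff h=0$), which is precisely your two-line concatenation. Your extra remark on the degenerate Killing case is a sensible clarification but not a departure from the paper's route.
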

\vspace{0.2cm}

\begin{remark}\label{rem3}
    We also mention that $\xi$ on an almost coK\"{a}hler manifold is 2-Killing if and only if $\nabla_\xi h'=2h^2$. In view of the second and third equation of \eqref{e12}, one can see 
    \begin{eqnarray*}
        \nabla_\xi h'=\nabla_\xi(h\phi)=-\nabla_\xi(\phi h)=-\phi(\nabla_\xi h). 
    \end{eqnarray*}
    Therefore, $\xi$ is 2-Killing if and only if $-\phi(\nabla_\xi h)=-2h^2$, that is $\nabla_\xi h=-2\phi h^2.$
\end{remark}

\vspace{0.2cm}

\begin{remark}
    Let $(\phi',\xi',\eta',g')$ be a structure of $M(\phi,\xi,\eta, g)$ under the $\mathcal{D}$-homothetic deformation, mentioned in the previous section.
If $H=\frac{1}{2}L_{\xi'}\phi'$ and $R'$ being the Riemann curvature tensor defined on $M(\phi',\xi',\eta',g')$, then \cite{DO} 
\begin{eqnarray*}
    &H=\frac{1}{u}h,~~~H'=H\phi'=\frac{1}{u}h'\nonumber\\
    &R'(Y,X)\xi'=\frac{1}{u}R(Y,X)\xi+\frac{\xi u}{u^2}(\eta(X)h'Y-\eta(Y)h'X).
\end{eqnarray*}
Hence, $h=0$ if and only if $H=0$. Therefore the property of the Reeb vector field to be mixed Killing is invariant under the $\mathcal{D}$-homothetic deformation. In other words, $\xi$ is mixed Killing if and only if $\xi'$ is mixed Killing.
\end{remark}

\vspace{0.2cm}

In the next theorem, we find out the necessary condition for a vector field $V$ to be mixed Killing where $V$ is pointwise collinear with $\xi$.  
\vspace{0.2cm}

\begin{theorem}
    Consider an almost coK\"{a}hler manifold $M(\phi,\xi,\eta,g)$ and the vector field $V=\alpha\xi$, for some smooth $\alpha$ on $M$. If $V$ is mixed Killing, then 
    \begin{enumerate}
        \item $\alpha^2(\nabla_\xi h)X=(\phi X\alpha)(\operatorname{grad}\alpha-(\xi\alpha)\xi)+\alpha^2\phi h^2X+\alpha(f+\alpha-\xi\alpha)hX.$ \label{o1}
        \item $(\xi\alpha)^2=|\operatorname{grad}\alpha|^2+\alpha^2\tr h^2.$\label{o2}
    \end{enumerate}
\end{theorem}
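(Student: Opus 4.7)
The strategy is to substitute $V=\al\xi$ directly into the mixed Killing equation $L_VL_Vg=fL_Vg$, convert it to a $(1,1)$-operator identity, and then extract (1) and (2) as separate consequences via an appropriate substitution and a trace. First, from $\nabla_X\xi=h'X$ in \eqref{e13} one gets $\nabla_XV=(X\al)\xi+\al h'X$, so
\[
(L_Vg)(X,Y)=(X\al)\eta(Y)+(Y\al)\eta(X)+2\al g(h'X,Y).
\]
To compute $L_VL_Vg$ I would write $L_Vg=\al L_\xi g+d\al\otimes\eta+\eta\otimes d\al$ and invoke the elementary identity
\[
L_{\al\xi}T=\al L_\xi T+d\al\otimes T(\xi,\cdot)+T(\cdot,\xi)\otimes d\al
\]
valid for any symmetric $(0,2)$-tensor $T$. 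Combined with $L_\xi\eta=0$ (since $\eta(\xi)=1$ and $d\eta=0$), $L_\xi d\al=d(\xi\al)$, and Lemma \ref{l1}, this yields a closed expression for $(L_VL_Vg)(X,Y)$ in terms of $\nabla_\xi h'$, $h^2$, $h'$, and the first and second derivatives of $\al$.

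For (1), I would dualize the identity $L_VL_Vg=fL_Vg$ into an operator equation of the shape $\al^2(\nabla_\xi h')X=(\cdots)$, substitute $X\mapsto\phi X$, and apply the almost coK\"ahler identities $h'(\phi X)=-hX$ and $h^2(\phi X)=\phi h^2X$, together with $(\nabla_\xi h)X=-(\nabla_\xi h')(\phi X)$; the last relation follows from $\nabla_\xi\phi=0$ and $h\xi=0$. The leftover $\xi$-component, a one-form contracted against $\phi X$, is eliminated using the constraint derived by setting $Y=\xi$ in the tensorial equation, which pins down the value of $f(X\al)-X(\al\,\xi\al)$ on vectors of the form $\phi X$. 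Reading off the $\ker\eta$-part then produces (1).

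For (2), I would first specialize $X=Y=\xi$ in the tensorial equation; using $h'\xi=h^2\xi=0$ and $(\nabla_\xi h')\xi=0$, this yields a scalar relation expressing $f(\xi\al)$ in terms of $(\xi\al)^2$ and $\al\xi(\xi\al)$. Independently, contracting the tensorial equation against an orthonormal frame, and using $\tr h=\tr h'=0$ together with $\tr(\nabla_\xi h')=\xi(\tr h')=0$, yields a second scalar identity expressing $f(\xi\al)$ as a linear combination of $|\operatorname{grad}\al|^2$, $\al^2\tr h^2$, $(\xi\al)^2$, and $\al\xi(\xi\al)$. Subtracting the two eliminates $f$ and $\al\xi(\xi\al)$ and delivers (2). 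The principal obstacle is simply the bookkeeping: because $\al$ is non-constant, every Lie derivative along $V=\al\xi$ spawns $\xi$-direction correction terms that must be carefully sorted out from the $\ker\eta$-contributions so that after $X\mapsto\phi X$ in (1) and after the trace in (2) the claimed identities emerge.
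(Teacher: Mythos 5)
Your strategy is essentially the paper's: compute $(L_Vg)(X,Y)=(X\alpha)\eta(Y)+(Y\alpha)\eta(X)+2\alpha g(h'X,Y)$, expand $L_VL_Vg$, use the specializations $X=\xi$ and then $Y=\xi$ to eliminate the second derivatives of $\alpha$ together with the $\xi$-component, then set $X\mapsto\phi X$ for item \ref{o1} and take a trace for item \ref{o2}. Your bookkeeping device --- the identity $(L_{\alpha\xi}T)(X,Y)=\alpha(L_\xi T)(X,Y)+(X\alpha)T(\xi,Y)+(Y\alpha)T(X,\xi)$ applied twice, together with Lemma \ref{l1}, $L_\xi\eta=0$ and $L_\xi d\alpha=d(\xi\alpha)$ --- is cleaner than the paper's expansion through \eqref{e4} and \eqref{e27}--\eqref{e29}, because it bypasses the computation of $L_Vh'$ entirely; the auxiliary identities you invoke ($h'\phi=-h$, $h^2\phi=\phi h^2$, $(\nabla_\xi h')\phi X=-(\nabla_\xi h)X$, and the elimination of $f$ and $\alpha\,\xi(\xi\alpha)$ in the trace step) are all correct.

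Be warned, however, that a careful execution of your plan does not reproduce the printed coefficients. Your route gives
\begin{align*}
(L_VL_Vg)(X,Y)&=2\alpha(\xi\alpha)g(h'X,Y)+4\alpha^2g(h^2X,Y)+2\alpha^2g((\nabla_\xi h')X,Y)\\
&\quad+\alpha\big[X(\xi\alpha)\eta(Y)+Y(\xi\alpha)\eta(X)\big]+(\xi\alpha)\big[(X\alpha)\eta(Y)+(Y\alpha)\eta(X)\big]+2(X\alpha)(Y\alpha),
\end{align*}
and after the eliminations one lands on $\alpha^2(\nabla_\xi h)X=(\phi X\alpha)(\operatorname{grad}\alpha-(\xi\alpha)\xi)+2\alpha^2\phi h^2X+\alpha(f-\xi\alpha)hX$ and $(\xi\alpha)^2=|\operatorname{grad}\alpha|^2+2\alpha^2\tr(h^2)$. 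The discrepancy originates in the paper's \eqref{e29}: since $\nabla\xi=h'$ commutes with $h'$, one has $(L_\xi h')X=(\nabla_\xi h')X-\nabla_{h'X}\xi+h'\nabla_X\xi=(\nabla_\xi h')X$ exactly, so the terms $-\alpha h^2X+\alpha h'X$ appearing there are spurious, and they propagate into the coefficients of $\phi h^2X$, $hX$ and $\tr(h^2)$ in the statement. The downstream uses of the theorem (Remark \ref{rem5} and the corollary) are unaffected, since they only require that the right-hand side of \ref{o1} reduce to $(\phi X\alpha)(\operatorname{grad}\alpha-(\xi\alpha)\xi)$ when $h=0$ and that \ref{o2} force $\operatorname{grad}\alpha=0$ and $h=0$ when $\xi\alpha=0$, and both of these survive the corrected constants.
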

\begin{proof}
    Let $V$ be a mixed Killing vector field on $M$ with mixed Killing factor $f$. Then we have for all vector fields $X,Y$ on $M$,
    $$(L_VL_Vg)(X,Y)=f(L_Vg)(X,Y).$$
    Since $V=\alpha\xi$ for some non-zero smooth function $\alpha$, we have $\nabla_XV=(X\alpha)\xi+\alpha h'X.$ Using first equation of \eqref{e13}, we have
    \begin{eqnarray}\label{e25}
        (L_Vg)(X,Y)=(X\alpha)\eta(Y)+(Y\alpha)\eta(X)+2\alpha g(h'X,Y).
    \end{eqnarray}
    Plugging this in \eqref{e4}, we obtain
    \begin{eqnarray}\label{e26}
        (L_VL_Vg)(X,Y)&=&X(V\alpha)\eta(Y)+Y(V\alpha)\eta(X)+(X\alpha)(L_V\eta)Y+(Y\alpha)(L_V\eta)X\nonumber\\
        &+&2(V\alpha)g(h'X,Y)+2\alpha g((L_Vh')X,Y)+4\alpha^2g(h^2X,Y)\nonumber\\
        &+&3\alpha (h'X\alpha)\eta(Y)+\alpha(h'Y\alpha)\eta(X).
    \end{eqnarray}
    One can easily verify the following computations.
    \begin{eqnarray}
        X(V\alpha)&=&(X\alpha)(\xi\alpha)+\alpha X(\xi\alpha).\label{e27}\\
        (L_V\eta)X&=&L_V(\eta(X))-\eta(L_VX)\nonumber\\
        &=&(\nabla_V\eta)X+\eta(\nabla_XV)\nonumber\\
        &=&X\alpha. ~~~~~~~~~~~~~~~~~~~~~~~~~~~~~~~~~~~~~~~~~~\text{[since $h'\xi=0$]}\label{e28}\\
        (L_Vh')X&=&L_V(h'X)-h'L_VX\nonumber\\
    &=&\alpha L_\xi(h'X)-(h'X\alpha)\xi-\alpha h'L_\xi X ~~~~~~\text{[since $h'\xi=0$]}\nonumber\\
    &=&\alpha(\nabla_\xi h')X-(h'X\alpha)\xi-\alpha h^2X+\alpha h'X. \label{e29}
    \end{eqnarray}
    Availing \eqref{e27}-\eqref{e29} in \eqref{e26}, we acquire
    \begin{eqnarray*}
        (L_VL_Vg)(X,Y)&=&(\xi\alpha)((X\alpha)\eta(Y)+(Y\alpha)\eta(X))+\alpha(X(\xi\alpha)\eta(Y)+Y(\xi\alpha)\eta(X))\nonumber\\
        &+&2(X\alpha)(Y\alpha)+2\alpha(\xi\alpha)g(h'X,Y)+2\alpha^2g((\nabla_\xi h')X,Y)\\
        &+&2\alpha^2g(h^2X,Y)-2\alpha^2g(h'X,Y)+\alpha (h'X\alpha)\eta(Y)+\alpha(h'Y\alpha)\eta(X).
    \end{eqnarray*}
    As $V$ is mixed Killing, using \eqref{e25} and the above, we have for all $X,Y\in\chi(M),$
    \begin{eqnarray}\label{e30}
        (\xi\alpha)((X\alpha)\eta(Y)+(Y\alpha)\eta(X))+\alpha(X(\xi\alpha)\eta(Y)+Y(\xi\alpha)\eta(X))+2(X\alpha)(Y\alpha)\nonumber\\
        +2\alpha(\xi\alpha)g(h'X,Y)+2\alpha^2g((\nabla_\xi h')X,Y)+2\alpha^2g(h^2X,Y)-2\alpha^2g(h'X,Y)\nonumber\\
        +\alpha (h'X\alpha)\eta(Y)+\alpha(h'Y\alpha)\eta(X)=f((X\alpha)\eta(Y)+(Y\alpha)\eta(X))+2\alpha fg(h'X,Y).
    \end{eqnarray}
    But then for $X=\xi$, we have
    \begin{eqnarray}
        3(\xi\alpha)(Y\alpha)+(\xi\alpha)^2\eta(Y)+\alpha\xi(\xi\alpha)\eta(Y)+\alpha Y(\xi\alpha)+\alpha(h'Y\alpha)=f(Y\alpha+(\xi\alpha)\eta(Y)).
    \end{eqnarray}
    Again for $Y=\xi$, we observe $\alpha(\xi\xi\alpha)=f(\xi\alpha)-2(\xi\alpha)^2$. Substituting this in the above, we obtain 
    \begin{equation*}
        \alpha(h'Y\alpha)=f(Y\alpha)-3(\xi\alpha)(Y\alpha)+(\xi\alpha)^2\eta(Y)-\alpha Y(\xi\alpha).
    \end{equation*}
    But then availing this in \eqref{e30} leads to
    \begin{eqnarray}
        -(\xi\alpha)[(X\alpha)\eta(Y)+(Y\alpha)\eta(X)]+(X\alpha)(Y\alpha)+\alpha(\xi\alpha-\alpha-f)g(h'X,Y)\nonumber\\+\alpha^2g(h^2X,Y)+\alpha^2g((\nabla_\xi h')X,Y)+(\xi\alpha)^2\eta(X)\eta(Y).
    \end{eqnarray}
    Replacing $X$ by $\phi X$ and observing that $(\nabla_\xi h')\phi X=-(\nabla_\xi h)X$, we have \ref{o1}.\\
    On the other hand, \ref{o2} follows when we take trace of the above equation.
\end{proof}

\begin{remark}\label{rem5}
 If $\alpha$ is constant along $\xi$, then from \ref{o2} of the above theorem it can be seen that $\alpha$ is constant on $M$ and $h=0$. Hence $V$ is mixed Killing if and only if $\xi$ is mixed Killing. On the other side, if $\alpha$ varies only along $\xi$, then for all $X$, we have $\phi X\alpha=0$. Hence $\alpha^2(\nabla_\xi h)X=\alpha^2\phi h^2X+\alpha(f+\alpha-\xi\alpha)hX$, follows from \eqref{o1}.
\end{remark}

\vspace{0.2cm}

Now suppose the Reeb vector field is mixed Killing and $V=\alpha\xi$ is pointwise collinear with $\xi$ for some smooth function $\alpha$. Then one might ask when $V$ will be mixed Killing. As $\xi$ is mixed Killing if and only if $h=0$, we have from \ref{o1} of the above theorem that $\operatorname{grad}\alpha=(\xi\alpha)\xi$. Thus the following corollary can be stated.

\vspace{0.2cm}
    
\begin{corollary}
    Consider an almost coK\"{a}hler manifold $M(\phi,\xi,\eta,g)$ with $\xi$ mixed Killing and a vector field $V$ pointwise collinear with $\xi$ on $M$. If $V$ is mixed Killing, then $\operatorname{grad}\alpha$ is pointwise collinear with $\xi$.
\end{corollary}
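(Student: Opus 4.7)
The plan is to combine Theorem~\ref{t1} with the two structural identities obtained in the preceding theorem, specialized to $V=\alpha\xi$. By Theorem~\ref{t1}, the hypothesis that $\xi$ is mixed Killing forces $h=0$ on $M$; consequently $hX=0$, $h^2=0$, $\nabla_\xi h=0$, and $\tr h^2=0$. Substituting these vanishings into identity~(\ref{o1}) of the previous theorem, every term collapses except one, and we are left with
\begin{equation*}
(\phi X\,\alpha)\bigl(\operatorname{grad}\alpha-(\xi\alpha)\xi\bigr)=0\qquad\text{for every }X\in\chi(M),
\end{equation*}
while identity~(\ref{o2}) reduces to $(\xi\alpha)^2=|\operatorname{grad}\alpha|^2$.

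From here the collinearity should drop out in one of two equivalent ways. The approach I would present uses the displayed identity directly: fix $p\in M$ and suppose, for contradiction, that $W:=\operatorname{grad}\alpha-(\xi\alpha)\xi$ is nonzero at $p$. Then the scalar factor $\phi X\,\alpha=g(\phi X,\operatorname{grad}\alpha)$ must vanish at $p$ for every $X\in\chi(M)$. Since $\operatorname{Im}\phi=\mathcal{D}=\ker\eta$, this says $\operatorname{grad}\alpha(p)$ is orthogonal to $\mathcal{D}$, hence collinear with $\xi_p$; write $\operatorname{grad}\alpha(p)=c\,\xi_p$. But then $(\xi\alpha)(p)=g(\xi,\operatorname{grad}\alpha)(p)=c$, which forces $W(p)=c\,\xi_p-c\,\xi_p=0$, contradicting the assumption. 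Hence $\operatorname{grad}\alpha=(\xi\alpha)\xi$ on $M$, giving the claimed pointwise collinearity with $\xi$.

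As an alternative (or a sanity check), the reduced form of (\ref{o2}) can be combined with Cauchy--Schwarz: since $\xi$ is a unit field, $(\xi\alpha)^2=g(\xi,\operatorname{grad}\alpha)^2\le|\operatorname{grad}\alpha|^2$, and (\ref{o2}) is precisely the equality case, which is collinearity of $\operatorname{grad}\alpha$ and $\xi$. The main obstacle is essentially nonexistent: the technical content lives in Theorem~\ref{t1} and the previous theorem, so the argument only has to (i) verify that the $h=0$ substitution really kills every $h$-term in (\ref{o1}) and (\ref{o2}), and (ii) invoke $\operatorname{Im}\phi=\mathcal{D}$ (or the equality case of Cauchy--Schwarz) to extract collinearity from the simplified identity.
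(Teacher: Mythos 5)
Your proposal is correct and follows essentially the same route as the paper: invoke Theorem~\ref{t1} to get $h=0$, substitute into identity~(1) of the preceding theorem so that only $(\phi X\,\alpha)\bigl(\operatorname{grad}\alpha-(\xi\alpha)\xi\bigr)=0$ survives, and conclude collinearity. You actually supply a pointwise case analysis (via $\operatorname{Im}\phi=\ker\eta$) that the paper leaves implicit, and your Cauchy--Schwarz reading of identity~(2) is a valid independent check.
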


\vspace{0.2cm}

A vector field $V$ is said to admit an {\it infinitesimal contact transformation} if the global 1-form $\eta$ is invariant along $V$, in other words for some smooth function $\sigma$, $V$ satisfies
\begin{equation*}
    L_V\eta=\sigma\eta.    
\end{equation*}
Note that if an almost coK\"{a}hler manifold $M(\phi,\xi,\eta,g)$ admits an infinitesimal contact transformation $V$, then the above equation is equivalent to 
\begin{equation}\label{e34}
    \eta(\nabla_XV)=\sigma\eta(X)-g(h'V,X),
\end{equation}
for all vector fields $X$ on $M$. We now prove the following. 
\vspace{0.2cm}

\begin{theorem}
    If an almost coK\"{a}hler manifold $M(\phi,\xi,\eta,g)$ with $\xi$ mixed Killing admits an infinitesimal contact transformation $V$, then $\operatorname{grad}\sigma=(\xi\sigma)\xi$.
\end{theorem}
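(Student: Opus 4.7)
The plan is to leverage the rigidity provided by Theorem \ref{t1}: since $\xi$ is mixed Killing, $h=0$, hence $h'=h\phi=0$, and \eqref{e13} forces $\nabla\xi=0$. In particular, $\xi$ is parallel and, by \eqref{e14}, $R(X,Y)\xi=0$ for all $X,Y\in\chi(M)$. These three facts are what drive the argument.

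First I would rewrite the contact hypothesis in its covariant form. Substituting $h'=0$ in \eqref{e34} reduces the infinitesimal contact condition to the pointwise identity
\begin{equation*}
\eta(\nabla_X V)=\sigma\,\eta(X)\qquad\text{for all }X\in\chi(M).
\end{equation*}
Since $\nabla\eta=0$ (because $\nabla\xi=0$ and $\eta=g(\,\cdot\,,\xi)$), differentiating this identity along a vector field $Y$ gives
\begin{equation*}
\eta(\nabla_Y\nabla_X V)=(Y\sigma)\eta(X)+\sigma\,\eta(\nabla_Y X),
\end{equation*}
and similarly with $X,Y$ swapped, and $\eta(\nabla_{[X,Y]}V)=\sigma\,\eta([X,Y])$.

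Next I would form the curvature combination $R(X,Y)V=\nabla_X\nabla_Y V-\nabla_Y\nabla_X V-\nabla_{[X,Y]}V$, apply $\eta$ to it, and observe that the $\sigma$-terms telescope (because $\eta(\nabla_X Y)-\eta(\nabla_Y X)-\eta([X,Y])=0$), leaving
\begin{equation*}
\eta\bigl(R(X,Y)V\bigr)=(X\sigma)\eta(Y)-(Y\sigma)\eta(X).
\end{equation*}
Since $R(X,Y)\xi=0$ and the Riemann tensor is skew-symmetric in its last two arguments, $\eta(R(X,Y)V)=g(R(X,Y)V,\xi)=-g(V,R(X,Y)\xi)=0$. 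Hence $(X\sigma)\eta(Y)=(Y\sigma)\eta(X)$ for all $X,Y$.

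Finally I would specialize $Y=\xi$, which yields $X\sigma=(\xi\sigma)\eta(X)=g((\xi\sigma)\xi,X)$ for every $X$; equivalently $\operatorname{grad}\sigma=(\xi\sigma)\xi$. No step is really an obstacle here; the only mild point to watch is the clean cancellation of the $\sigma\,\eta(\nabla_\cdot\cdot)$ terms when assembling the curvature, together with the identification $\eta(R(X,Y)V)=-g(R(X,Y)\xi,V)$, both of which rely crucially on $h=0$ (i.e.\ on $\xi$ being parallel) from Theorem \ref{t1}.
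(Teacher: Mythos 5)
Your proof is correct, and it takes a genuinely different route from the paper's. The paper works through Yano's machinery: it takes the Lie derivative of $g(X,\xi)$ along $V$, differentiates covariantly along $\xi$, and then invokes the two identities $(L_V\nabla)(X,Y)=\nabla_X\nabla_YV-\nabla_{\nabla_XY}V-R(X,V)Y$ and $2g((L_V\nabla)(X,Y),Z)=(\nabla_XL_Vg)(Y,Z)+(\nabla_YL_Vg)(X,Z)-(\nabla_ZL_Vg)(X,Y)$ evaluated at $X=Y=\xi$, assembling the conclusion from \eqref{e36}, \eqref{e39} and \eqref{e41}. You instead reduce the contact hypothesis to $\eta(\nabla_XV)=\sigma\eta(X)$ (valid since $h'=0$), differentiate it covariantly in $X$ and $Y$, and let the curvature operator do the bookkeeping: the $\sigma$-terms cancel by torsion-freeness, and $\eta(R(X,Y)V)=-g(R(X,Y)\xi,V)=0$ because $\xi$ is parallel, yielding $(X\sigma)\eta(Y)=(Y\sigma)\eta(X)$ and hence the claim with $Y=\xi$. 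Both arguments rest on the same geometric input from Theorem \ref{t1} (namely $h=0$, so $\nabla\xi=0$ and $R(\cdot,\cdot)\xi=0$), but yours is shorter, avoids citing the Lie derivative of the connection entirely, and makes the mechanism transparent; the paper's version keeps $L_Vg$ in the foreground, consistent with the rest of the article, and its intermediate formulas \eqref{e37}--\eqref{e39} are stated before $h=0$ is used, so they could in principle be reused in settings where $\xi$ is not parallel. All steps in your argument check out, including the skew-symmetry $g(R(X,Y)V,\xi)=-g(R(X,Y)\xi,V)$ and the vanishing $\nabla\eta=0$.
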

\begin{proof}
    According to Theorem \ref{t1}, as $\xi$ is mixed Killing, we have $h=0$ on $M$. Taking Lie derivative of $\eta(X)=g(X,\xi)$ along $V$ and since $V$ is an infinitesimal contact transformation, we have
    \begin{equation}\label{e35}
        (L_Vg)(X,\xi)=\sigma\eta(X)+g(X,\nabla_\xi V).
    \end{equation}
    But then again covariant derivative of the above along $\xi$ gives
    \begin{equation}\label{e36}
        (\nabla_\xi L_Vg)(X,\xi)=(\xi\sigma)\eta(X)+g(X,\nabla_\xi\nabla_\xi V).
    \end{equation}
    It can be easily seen from \cite{Y} that 
    $$(L_V\nabla)(X,Y)=\nabla_X\nabla_YV-\nabla_{\nabla_XY}V-R(X,V)Y.$$
    Evaluating this at $(\xi,\xi)$, and noting that $R(\xi,V)\xi=0$, one can see for any $Z\in\chi(M)$,
    \begin{equation}\label{e37}
        g((L_V\nabla)(\xi,\xi),Z)=g(\nabla_\xi\nabla_\xi V,Z).
    \end{equation}
    The following relation also holds on any Riemannian manifold \cite{Y},
    $$2g((L_V\nabla)(X,Y),Z)=(\nabla_XL_Vg)(Y,Z)+(\nabla_YL_Vg)(X,Z)-(\nabla_ZL_Vg)(X,Y).$$
    But then substituting $X=Y=\xi$ in the above, we get
    \begin{equation}\label{e38}
        2g((L_V\nabla)(\xi,\xi),Z)=2(\nabla_\xi L_Vg)(\xi,Z)-(\nabla_ZL_Vg)(\xi,\xi).
    \end{equation}
    We now compare \eqref{e37} and \eqref{e38} to obtain
    \begin{equation}\label{e39}
        2g(\nabla_\xi\nabla_\xi V,X)=2(\nabla_\xi L_Vg)(\xi,X)-(\nabla_XL_Vg)(\xi,\xi).
    \end{equation}
    Since $g(\xi,\xi)=1$, taking Lie derivative of this along $V$, we have
    \begin{eqnarray*}
        (L_Vg)(\xi,\xi)&=&-2g(L_V\xi,\xi)\\
                    &=&2g(\nabla_\xi V,\xi).~~~~~~~~~~~~~~~\text{[since $h'\xi=0$]}
    \end{eqnarray*}
    Covariant derivative of this along $X$ then yields 
    \begin{equation}\label{e40}
        (\nabla_XL_Vg)(\xi,\xi)=2\eta(\nabla_X\nabla_\xi V).
    \end{equation}
    On the other hand, evaluating \eqref{e34} for $X=\xi$, we have $\eta(\nabla_\xi V)=\sigma$. But then covariant derivative of this with respect to $X$ implies $\eta(\nabla_X\nabla_\xi V)=X\sigma$. Hence \eqref{e40} gives
    \begin{equation}\label{e41}
        (\nabla_XL_Vg)(\xi,\xi)=2X\sigma.
    \end{equation}
    Finally availing \eqref{e24},\eqref{e36} and \eqref{e41} in \eqref{e39}, we acquire $X\sigma=(\xi\sigma)\eta(X)$.
\end{proof}
\vspace{0.3cm}

\subsection{$\eta$-Einstein almost coK\"{a}hler manifold with $\xi$ mixed Killing}\label{ssec1}
Recall from Section \ref{sec3} that an $\eta$-Einstein almost coK\"{a}hler manifold $M^{2n+1}(\phi,\xi,\eta,g)$ satisfies \eqref{e16},
where $a$ and $b$ are smooth functions given by $a=\frac{1}{2n}\big(r+\tr(h^2)\big)$ and $b=-\frac{1}{2n}\big(r+(2n+1)\tr(h^2)\big)$. Also note that, if $\xi$ is mixed Killing, then $h=0$ and $r$ is constant along $\xi$. 
\vspace{0.2cm}

\begin{theorem}\label{t10}
    Consider an $\eta$-Einstein almost coK\"{a}hler manifold $M^{2n+1}(\phi,\xi,\eta,g)$ with $\xi$ mixed Killing. Then the manifold is of constant scalar curvature $r$ and $QX=\frac{r}{2n}\{X-\eta(X)\xi\}$. Moreover, it is Einstein if and only if it is Ricci flat.
\end{theorem}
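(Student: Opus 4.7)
The plan is to combine Theorem \ref{t1} with the $\eta$-Einstein relation and then upgrade partial information about the scalar curvature via the twice-contracted second Bianchi identity. First, since $\xi$ is mixed Killing, Theorem \ref{t1} gives $h=0$, so $\tr(h^2)=0$. Substituting this into the formulas for $a$ and $b$ in \eqref{e16} immediately yields $a=r/(2n)$ and $b=-r/(2n)$, and hence
\begin{equation*}
QX = \frac{r}{2n}\bigl\{X-\eta(X)\xi\bigr\}.
\end{equation*}
In particular $Q\xi=0$, which already gives the displayed formula of the theorem.

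The substantive work is to upgrade ``$r$ constant along $\xi$'' (already observed in the discussion preceding Theorem \ref{t7}) to ``$r$ globally constant''. Since $h=0$, equation \eqref{e13} forces $\nabla\xi=h'=0$, so both $\xi$ and $\eta$ are parallel. Differentiating the formula above therefore gives
\begin{equation*}
(\nabla_XQ)Y = \frac{Xr}{2n}\bigl(Y-\eta(Y)\xi\bigr).
\end{equation*}
I would then feed this into the twice-contracted Bianchi identity $\sum_i g\bigl((\nabla_{e_i}Q)e_i,Y\bigr)=\tfrac12 Yr$. With an orthonormal frame, $\sum_i(e_ir)g(e_i,Y)=Yr$ and $\sum_i(e_ir)\eta(e_i)=\xi r$, so the identity reduces to $\tfrac{1}{2n}\bigl(Yr-\eta(Y)\xi r\bigr)=\tfrac12 Yr$, equivalently
\begin{equation*}
(1-n)\,Yr = \eta(Y)\,\xi r.
\end{equation*}
Setting $Y=\xi$ gives $-n\,\xi r=0$, so $\xi r=0$; the identity then collapses to $(1-n)Yr=0$. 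For $n\geq 2$ this forces $Yr=0$ for all $Y$, so $r$ is constant. For $n=1$ (i.e., $\dim M=3$) Theorem \ref{t7} already identifies $M$ locally with $\R^3$ or $\R\times K^2$ for $K^2$ a surface of constant curvature, on which $r$ is manifestly constant.

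For the last claim, if $M$ is Einstein then $QX=\lambda X$ for some smooth $\lambda$; evaluating at $\xi$ and using $Q\xi=0$ forces $\lambda=0$, so $Q\equiv 0$ and $M$ is Ricci flat. The converse is immediate. The chief obstacle in the whole argument is precisely the step from $\xi r=0$ to $r$ globally constant: the twice-contracted Bianchi identity is tailor-made for the job but yields the conclusion directly only when $n\geq 2$, which is why the three-dimensional case has to be routed through the classification already supplied by Theorem \ref{t7}.
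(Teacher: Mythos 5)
Your argument for $n\ge 2$ is exactly the paper's: $h=0$ from Theorem \ref{t1} reduces the $\eta$-Einstein condition to $QX=\frac{r}{2n}\{X-\eta(X)\xi\}$, covariant differentiation (using $\nabla\xi=0$) gives $(\nabla_XQ)Y=\frac{Xr}{2n}(Y-\eta(Y)\xi)$, and the twice-contracted second Bianchi identity forces $Yr=0$. You are in fact more careful than the paper at the crucial step: the paper passes from $\tfrac12 Yr=\tfrac{1}{2n}Yr$ to $Yr=0$ without comment, which is valid only for $n\ge 2$, whereas you isolate the identity $(1-n)Yr=\eta(Y)\xi r$ and note that it degenerates at $n=1$. (Your sign in the contracted identity is the correct one; the paper's $+(\xi r)\eta(Y)$ in \eqref{e43} is a typo, harmless since $\xi r=0$.)

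The gap is in your handling of $n=1$. You close that case by citing Theorem \ref{t7}, but in the paper Theorem \ref{t7} is exactly the conclusion of the discussion preceding it, which explicitly invokes Theorem \ref{t10} to assert ``such manifold has constant scalar curvature $r$'' before the Codazzi condition $(\nabla_YQ)X=(\nabla_XQ)Y$ and Perrone's classification can be applied; indeed, with $QX=\frac{r}{2}(X-\eta(X)\xi)$ and $\eta$ parallel, that Codazzi condition is equivalent to the constancy of $r$. So your appeal to Theorem \ref{t7} is circular. Moreover the three-dimensional case does not appear to be repairable: take $M=\R\times N^2$ with the product metric, where $N^2$ is any K\"{a}hler surface of non-constant Gauss curvature $K$. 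Then $h=0$ and $\xi=\partial_t$ is parallel, hence Killing and therefore mixed Killing; $QY=KY-K\eta(Y)\xi$, which is precisely \eqref{e16} with $a=K$, $b=-K$ (the paper only requires $a,b$ smooth); yet $r=2K$ is not constant. Thus the statement itself fails for $n=1$, and the honest scope of both your argument and the paper's is $n\ge 2$.
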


\begin{proof}
We start with covariant derivatives of \eqref{e16} with $h=0$ and it gives
\begin{equation}\label{e42}
    (\nabla_XQ)Y=\frac{Xr}{2n}[Y-\eta(Y)\xi].
\end{equation}
Contraction of the above with respect to $X$, and using contracted Bianchi second identity, we have 
\begin{equation}\label{e43}
    (\operatorname{div}Q)Y=\frac{1}{2}Yr=\frac{1}{2n}[Yr+(\xi r)\eta(Y)].
\end{equation}
Because $\xi r=0$, we have $Yr=0$ for all $Y\in\chi(M)$. Hence $M$ has constant scalar curvature.
\end{proof}

\vspace{0.2cm}

\begin{remark}
    Note that the above theorem extends Proposition $3.2$ of \cite{W2}, where it is shown that if $\xi$ is Killing on any $\eta$-Einstein almost coK\"{a}hler manifold of dimension larger than 3, then $a$ and $b$ are constants. 
\end{remark}

\subsection{$(\kappa,\mu)$-almost coK\"{a}hler manifold with $\xi$ mixed Killing}\label{ssec2}
 Here we note the geometry of non-coK\"{a}hler $(\kappa,\mu)$-almost coK\"{a}hler manifold with $\xi$ as mixed Killing vector field. Recall from Section \ref{sec3} that $(\kappa,\mu)$-almost coK\"{a}hler manifolds are the almost coK\"{a}hler manifolds satisfying \eqref{e17}, where $\kappa,\mu$ are smooth functions. For $\kappa=0$, it is known that the manifold is coK\"{a}hler, and $\xi$ is Killing, hence mixed Killing. For non-coK\"{a}hler $(\kappa,\mu)$-almost coK\"{a}hler manifold, clearly $\xi$ can not be mixed Killing as $h\ne0$.

\vspace{0.2cm}
 
 \begin{theorem}
 On any non-coK\"{a}hler $(\kappa,\mu)$-almost coK\"{a}hler manifold $M(\phi,\xi,\eta,g)$, the Reeb vector field $\xi$ can not be mixed Killing.
 \end{theorem}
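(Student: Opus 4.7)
The plan is to derive this statement as a quick contradiction from Theorem \ref{t1} combined with the structural identity \eqref{e18}. First, I would argue by contradiction: suppose that on a non-coK\"ahler $(\kappa,\mu)$-almost coK\"ahler manifold $M(\phi,\xi,\eta,g)$ the Reeb vector field $\xi$ is mixed Killing. Since a $(\kappa,\mu)$-almost coK\"ahler manifold is in particular an almost coK\"ahler manifold, Theorem \ref{t1} applies and forces $h=0$ identically on $M$.

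Next, I would invoke the identity $h^{2}=\kappa\phi^{2}$ from \eqref{e18}, valid on every $(\kappa,\mu)$-almost coK\"ahler manifold. Once $h=0$, this relation degenerates to $\kappa\phi^{2}=0$. Using \eqref{e10}, $\phi^{2}X=-X+\eta(X)\xi$ is nonzero on the contact distribution $\mathcal{D}=\ker\eta$, so evaluating at any $X\in\mathcal{D}$ yields $\kappa X=0$, forcing $\kappa\equiv 0$ on $M$.

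However, as recalled in Section \ref{sec3}, a $(\kappa,\mu)$-almost coK\"ahler manifold is coK\"ahler if and only if $\kappa=0$, and the definition of \emph{non-coK\"ahler} is precisely $\kappa<0$. Therefore $\kappa\equiv 0$ contradicts the non-coK\"ahler hypothesis, and the assumed mixed Killing property for $\xi$ cannot hold. There is essentially no technical obstacle here; the entire content of the theorem is the reduction to $h=0$ provided by Theorem \ref{t1}, coupled with the observation that on a non-coK\"ahler $(\kappa,\mu)$-structure, $h$ is forced to be nonzero at every point by \eqref{e18}.
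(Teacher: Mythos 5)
Your argument is correct and is essentially the paper's own: the paper dispenses with this theorem in the sentence preceding it, noting that $\xi$ mixed Killing forces $h=0$ by Theorem \ref{t1}, while non-coK\"ahler means $\kappa<0$ and hence $h\neq 0$ via $h^{2}=\kappa\phi^{2}$. You have simply spelled out the evaluation of $\kappa\phi^{2}=0$ on $\mathcal{D}$, which the paper leaves implicit.
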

 
\vspace{0.2cm}

\begin{remark}
  The above discussions shows that on any $(\kappa,\mu)$-almost coK\"{a}hler manifold $M(\phi,\xi,\eta,g)$, $\xi$ is mixed Killing if and only if $\xi$ is Killing (and therefore the manifold is coK\"{a}hler).
\end{remark}
\vspace{0.2cm}

\begin{remark}
    Note that for $3$ dimensional $(\kappa,\mu)$-almost coK\"{a}hler manifold $M^3(\phi,\xi,\eta,g)$, the Ricci operator is given by \eqref{e20}. If $\xi$ is mixed Killing, then $h=0$ and hence the manifold $M^3$ is coK\"{a}hler.  Moreover $M^3$ is $\eta$-Einstein with Ricci operator given by (as $\kappa=0$)
    $$    QY=\frac{r}{2}\bigg(Y-\eta(Y)\xi\bigg).$$
    Then again $r$ is constant, and so $M^3$ is either $\mathbb{R}^3$ or a product space $\mathbb{R}\times K^2$, where $K^2$ is the K\"{a}hler surface of constant curvature, as discussed in Theorem \ref{t7}. 
\end{remark}

\vspace{0.3cm}

\subsection{Collective examples}\label{ssec3}

Now we present few examples of different types of almost coK\"{a}hler manifolds in this section.
\begin{enumerate}
    \item We start with a simple one. Consider $M=\mathbb{R}^3$ with the flat metric $g$. Clearly $\{\frac{\partial}{\partial x}, \frac{\partial}{\partial y}, \frac{\partial}{\partial z}\}$ forms an orthonormal basis for the tangent space $T_pM$ at any point $p\in M$. Consider $\xi=\frac{\partial}{\partial x}$, $\eta(.)=g(.,\frac{\partial}{\partial x})$ and define the $(1,1)$ tensor field $\phi$ as $\phi(\frac{\partial}{\partial x})=0$, $\phi(\frac{\partial}{\partial y})=\frac{\partial}{\partial z}$ and $\phi(\frac{\partial}{\partial z})=-\frac{\partial}{\partial y}$. Then it is easy to see that $(\phi,\xi, \eta,g)$ establishes a coK\"{a}hler structure on $M$. 

    If $V=x\frac{\partial}{\partial x}+(y-z)\frac{\partial}{\partial y}+(y+z)\frac{\partial}{\partial z}$, then $L_Vg=2g$ and hence $V$ is a homothetic vector field. Therefore $V$ is mixed Killing vector field on $M$ with mixed Killing factor $f=2$.

    \item Define a smooth manifold $M=\{(x,y,z)\in\mathbb{R}^3:z>0\}=\mathbb{R}^2\times \mathbb{R}_+$ and   the $(1,1)$ tensor field $\phi$ as $\phi\frac{\d}{\d x}=\frac{z^2}{e^{ax}}\frac{\d}{\d y},$ $\phi\frac{\d}{\d y}=-\frac{e^{ax}}{z^2}\frac{\d}{\d y}$ and $\phi\frac{\d}{\d z}=0$. Taking $\xi=\frac{\d}{\d z}$ and $\eta=dz$, it is shown that \cite{OD} $M(\phi,\xi,\eta,g)$  defines an almost coK\"{a}hler manifold where the compatible metric $g$ is given by $g=z^2dx^2+\frac{e^{2ax}}{z^2}dy^2+dz^2$. Consider an orthonormal global frame $\{e_1=\frac{1}{z}\frac{\d}{\d x},e_2=\frac{z}{e^{ax}}\frac{\d}{\d y}, e_3=\frac{\d}{\d z}\}$ on $M$ with the Lie brackets $[e_1,e_2]=-\frac{a}{z}e_2$, $[e_1,e_3]=\frac{1}{z}e_1$ and $[e_2,e_3]=-\frac{1}{z}e_2$. Then using the Koszul's formula, Levi-Civita connection of $g$ can be written as \cite{OD}
    \begin{eqnarray*}
        \nabla_{e_1}e_1=-\frac{1}{z}e_3,&~~~\nabla_{e_2}e_1=\frac{a}{z}e_2,&~~~\nabla_{e_3}e_1=0,\\
        \nabla_{e_1}e_2=0,&~~~\nabla_{e_2}e_2=-\frac{a}{z}e_1+\frac{1}{z}e_3,&~~~\nabla_{e_3}e_2=0,\\
        \nabla_{e_1}e_3=\frac{1}{z}e_1,&~~~\nabla_{e_2}e_3=-\frac{1}{z}e_2,&~~~\nabla_{e_3}e_3=0.
    \end{eqnarray*}
    It can be easily seen that $h\phi=\nabla\xi\ne0$ on $M$. In fact $he_1=\frac{1}{z}e_2$, $he_2=\frac{1}{z}e_1$ and $he_3=0$. Since $h\ne0$, we can affirm that $\xi$ is not mixed Killing on $M$. Moreover, any vector field $V=\alpha\xi$ with $\alpha$ constant along $\xi$ can not be mixed Killing on $M$ as $h\ne0$. On the other hand if $\alpha$ only varies along $\xi$, it is easy to see that (with the help of Remark \ref{rem5}) $V$ is mixed Killing only when $\alpha=0$. Therefore it can be asserted that any vector field pointwise collinear with $\xi$ can not be mixed Killing on $M$. As $M$ is not coK\"{a}hler, this discussions indirectly nod with Theorem \ref{t7}. It can be noted here that $\xi=e_3$ is not 2-Killing as $\nabla_{e_3}h\ne-2\phi h^2$, as depicted in Remark \ref{rem3}.

    \item A non-coK\"{a}hler almost coK\"{a}hler manifold $H(\phi,\xi,\eta,g)$ with flat K\"{a}hlerian leaves and with $\xi$ not mixed Killing can be obtained from {\it Example 3 of \cite{Ols1}}. Let $H=\{(x^0,x^k,x^{k'})\in\R^{2n+1}:k=1,2,...,n;k'=k+n\}$ be a semi-direct product of standard abelian Lie groups $\R$ and $\R^{2n}$ with the product rule
    $$x.y=(x^0,x^k,x^{k'}).(y^0,y^k,y^{k'})=(x^0+y^0,x^k+y^k\operatorname{exp}(-a_kx^0),x^{k'}+y^{k'}\operatorname{exp}(a_kx^0)),$$ for some real numbers $a_k:k=1,2,...,n$ such that $\sum a_k^2>0$.
    Consider the vector fields $e_0=\frac{\d}{\d x^0}$, $e_k=\operatorname{exp}(-a_kx^0)\frac{\d}{\d x^k}$, and $e_{k'}=\operatorname{exp}(a_kx^0)\frac{\d}{\d x^{k'}}$ which forms a basis for the Lie algebra of $H$ and the metric $g(e_i,e_j)=\delta_{ij}$. Defining $\xi=e_0$ with $\eta(e_0)=1$ and $\phi e_k=e_k'$, $\phi e_k'=-e_k$, $\phi e_0=0$, it can be seen that the manifold $H(\phi,\xi,\eta,g)$ constitutes an almost coK\"{a}hler structure. Non-zero Lie brackets can be defined on $H$ by $[e_0,e_k]=-a_ke_k$, $[e_0,e_{k'}]=a_ke_{k'}$ with non-zero components of the Levi-Civita connection
    \begin{eqnarray*}
        \nabla_{e_k}e_0=a_ke_k, ~~\nabla_{e_{k'}}e_0=-a_ke_{k'},~~ \nabla_{e_k}e_k=-a_ke_0 ~~\text{and}~~ \nabla_{e_{k'}}e_{k'}=a_ke_0.
    \end{eqnarray*}
    It can be easily seen that $he_k=a_ke_{k'}$ and $he_{k'}=a_ke_k$, $he_0=0$. Hence $e_0$ can not be mixed Killing. Theorem \ref{e8} shows that there doesn't exist any non-coK\"{a}hler almost coK\"{a}hler manifold  with $\xi$ mixed Killing. This very example supports to this fact. Although $\nabla_{e_0}h=0$ on $H$ but $e_0$ can not be 2-Killing as $\sum a_k^2>0$. 
    \item Let $M$ be a non-K\"{a}hler almost K\"{a}hler manifold. Then the product $\mathbb{R}\times M$ admits a non-coK\"{a}hler almost coK\"{a}hler manifold, for which the Reeb vector field $\xi$ is Killing \cite{Ols3}. Hence $\xi$ is mixed Killing.
    
    For instance, a non-coK\"{a}hler almost coK\"{a}hler manifold of dimension 5 with $h=0$ may be constructed by the product of the Kodaira-Thurston manifolds and a real line or a circle. Kodaira-Thurston manifolds are  compact 4 dimensional manifolds which are symplectic but not K\"{a}hler \cite{KU}. Clearly it also admits a almost K\"{a}hler structure due to its symplectic structure, in fact it is non-K\"{a}hler almost K\"{a}hler. Hence its product with a real line or a circle admits a non-coK\"{a}hler almost coK\"{a}hler manifold with $h=0$.
 \noindent
\end{enumerate}

\vspace{0.3cm}

\section{Conclusions} 
This research set out to study the curvature properties of a (semi-)Riemannian manifold along a particular direction, namely mixed Killing vector field. Bochner's theorem has been generalized in this setting. It is further studied in the framework of contact Riemmanian manifold and found out the necessary and sufficient condition for the characteristic vector field $\xi$ of almost coK\"{a}hler manifold to be mixed Killing. Towards this, we completely characterize three dimensional almost coK\"{a}hler manifolds with mixed Killing characteristic vector fields. We have also found the necessary condition for any vector field pointwise collinear with $\xi$ to be mixed Killing on almost coK\"{a}hler manifold. We also characterize the geometries of $\eta$-Einstein or $(\kappa,\mu)$-almost coK\"{a}hler manifolds with mixed Killing $\xi$. Several examples have been presented at the end. 

Remark \ref{rem2} has shown the nonlinearity of the flows of mixed Killing vector fields in one dimensional manifold. It is interesting to study further explicitly in higher dimensions (even in non-flat cases). This may explore a different sets of nonlinear partial differential equations which enrich the nonlinear characteristics of geometry.  The article mainly focuses on the geometry of almost coK\"{a}hler manifolds where $\xi$ is mixed Killing. Although it is of interest to classify the mixed Killing vector fields other than $\xi$ on this metric manifold. Different contact structures with mixed Killing vector fields may characterize the geometry towards its classifications. These unanswered questions may need further investigations in future.

Eventually it can be said that the geometries of the manifolds and their curvature estimations can be largely predicted from the mixed Killing vector fields of the manifolds.

\section{Acknowledgments} 

 Work of the author is financially supported by CSIR-UGC (Ref. No. 201610010610). The author is thankful to Prof. Arindam Bhattacharyya of Jadavpur University and Dr. Hemangi Madhusudan Shah of Harish-Chandra Research Institute for useful discussions and motivations toward this project.

\end{document}